\theoremstyle{plain}
\newtheorem{thm}[equation]{Theorem}
\newtheorem{cor}[equation]{Corollary}
\newtheorem{prop}[equation]{Proposition}
\newtheorem{lem}[equation]{Lemma}
\theoremstyle{definition}
\theoremstyle{remark}
\newtheorem{rem}[equation]{Remark}
\newtheorem{rems}[equation]{Remarks}
\numberwithin{equation}{section}
\newcommand{\mmod}{{\text{-}\mathrm{mod}}}
\newcommand{\hdot}{{\:\raisebox{2pt}{\text{\circle*{1.5}}}}}
\newcommand{\idot}{{\:\raisebox{2pt}{\text{\circle*{1.5}}}}}
\DeclareMathOperator{\CC}{{\mathrm{CC}}}
\DeclareMathOperator{\res}{{\mathrm{Res}}}
\DeclareMathOperator{\Res}{{\mathrm{DRes}}}
\DeclareMathOperator{\ind}{{\mathrm{Ind}}}
\DeclareMathOperator{\sym}{\mathrm{Sym}}
\DeclareMathOperator{\im}{\mathrm{Im}}
\DeclareMathOperator{\supp}{\mathrm{Supp}}
\DeclareMathOperator{\coh}{{\textit{Coh}}}
\DeclareMathOperator{\Ker}{\mathrm{Ker}}
\DeclareMathOperator{\End}{\mathrm{End}}
\DeclareMathOperator{\gr}{\mathrm{gr}}
\DeclareMathOperator{\SSS}{{\mathrm{SS}}}
\DeclareMathOperator{\Ad}{\mathrm{Ad}}
\DeclareMathOperator{\ad}{\mathrm{ad}}
\DeclareMathOperator{\Ind}{{\mathrm{DInd}}}
\DeclareMathOperator{\rad}{\mathsf{rad}}
\newcommand{\fp}{{\mathfrak p}}
\newcommand{\fq}{{\mathfrak q}}
\newcommand{\tfp}{{p}}
\newcommand{\tfq}{{q}}
\newcommand{\fm}{{\mathfrak M}}
\newcommand{\beq}{\begin{equation}\label}
\newcommand{\eeq}{\end{equation}}
\newcommand{\pf}{\begin{proof}}
\newcommand{\ep}{\end{proof}}
\DeclareMathOperator{\Spec}{\mathrm{Spec}}
\newcommand{\iso}{{\;\stackrel{_\sim}{\to}\;}}
\DeclareMathOperator{\Hom}{\mathrm{Hom}}
\def\ccirc{{{}_{\,{}^{^\circ}}}}
\newcommand{\bdot}{{\boldsymbol{\cdot}}}
\renewcommand{\o}{\otimes }
\newcommand{\dash}{\text{-}}
\renewcommand{\t}{{\mathfrak t}}
\newcommand{\ce}{{\mathcal E}}
\newcommand{\GT}{{G\times T}}
\newcommand{\wt}{\widetilde }
\newcommand{\Id}{{\operatorname{Id}}}
\newcommand{\zz}{{\mathcal Z}}
\newcommand{\xx}{{\mathcal X}}
\newcommand{\fc}{{\mathfrak c}}
\newcommand{\eq}{{\ =\ }}
\newcommand{\pq}{{p\times q}}
\newcommand{\ccong}{\ \cong\ }
\newcommand{\gt}{{G\times T}}
\newcommand{\erem}{\hfill$\lozenge$\end{rem}\vskip 3pt }
\newcommand{\erems}{\hfill$\lozenge$\end{rems}\vskip 3pt }
\newcommand{\fy}{{Y}}
\newcommand{\scr}[1]{\mathscr{#1}}
\newcommand{\dcoh}{{D^b_{\textit{coh}}}}
\newcommand{\lo}{{\,\stackrel{L}\o\,}}
\newcommand{\xxx}{{X\times T}}
\newcommand{\cg}{{\mathcal G}}
\newcommand{\ddx}{{\dd(X)}}
\newcommand{\nn}{{\mathbf N}}
\newcommand{\cmm}{{\boldsymbol{\mathcal M}}}
\newcommand{\la}{\lambda}
\renewcommand{\gg}{{G\times \g}}
\newcommand{\dd}{{{\mathscr{D}}}}
\newcommand{\oo}{{\mathcal{O}}}
\newcommand{\kk}{{\mathcal{K}}}
\newcommand{\vp}{{\varpi}}
\newcommand{\ug}{{{\mathcal{U}}{\mathfrak{g}}}}
\renewcommand{\tt}{{T\times\t}}
\newcommand{\pa}{\partial }
\renewcommand{\H}{{\scr H}}
\newcommand{\ddg}{{\dd(G)}}
\newcommand{\ddt}{{\dd(T)}}
\newcommand{\ddtw}{{\dd(T)^W }}
\newcommand{\tg}{{\wt G}}
\newcommand{\al}{\alpha }
\newcommand{\ut}{{\mathcal{U}\t}}
\newcommand{\hc}{{Harish-Chandra }}
\newcommand{\C}{\mathbb{C}}
\newcommand{\g}{{\mathfrak{g}}}
\renewcommand{\b}{\mathfrak{b}}
\newcommand{\bb}{{\scr B}}
\newcommand{\mm}{{\mathbf M}}
\newcommand{\T}{{\mathcal T}}
\newcommand{\inv}{^{-1}}
\newcommand{\cf}{{\mathcal F}}
\newcommand{\Z}{{\mathbb Z}}
\newcommand{\en}{{\enspace}}
\newcommand{\vi}{${\en\sf {(i)}}\;$}
\newcommand{\vii}{${\;\sf {(ii)}}\;$}
\newcommand{\viii}{${\sf {(iii)}}\;$}
\newcommand{\iv}{${\sf {(iv)}}\;$}
\newcommand{\vv}{${\sf {(v)}}\;$}
\newcommand{\sseteq}{\subseteq }
\newcommand{\sset}{\subset}
\newcommand{\sminus}{\smallsetminus}
\newcommand{\into}{\,\hookrightarrow\,}
\newcommand{\mto}{\mapsto}
\newcommand{\onto}{\,\twoheadrightarrow\,}
\newcommand{\Ga}{\Gamma }
\newcommand{\om}{\omega }
\title{Parabolic induction and
 the Harish-Chandra $\boldsymbol{\mathcal D}$-module}
\author{Victor Ginzburg}\address{
Department of Mathematics, University of Chicago,  Chicago, IL 
60637, USA.}
\email{ginzburg@math.uchicago.edu}
\begin{document}
\maketitle
\begin{flushright} {\em To the memory of Tom Nevins}
\end{flushright}
\bigskip

\begin{abstract} Let $G$ be  a  reductive  group 
and $L$ a
Levi subgroup.  Parabolic
induction and  restriction
are a pair  of adjoint functors
between  $\Ad$-equivariant derived  categories of 
either constructible sheaves or  (not necessarily holonomic)
$\dd$-modules on $G$ and $L$, respectively.
Bezrukavnikov and Yom Din proved,
generalizing a classic result of Lusztig, 
that these functors  are exact. 
In this paper, we consider 
a special case where $L=T$ is a maximal torus.
We give  explicit  formulas for  parabolic induction and restriction  in
terms of   the
Harish-Chandra $\dd$-module on
$\gt$.
We show that  this
 module is flat  over $\dd(T)$, which easily implies
 that  parabolic induction and restriction are
 exact   functors between the corresponding abelian categories of $\dd$-modules.
 \end{abstract}
\maketitle

{\small
\tableofcontents
}


\section{Main results}\label{main-sec}

\subsection{} Throughout the paper, we fix
a connected and simply-connected complex semisimple group $G$.
Let $T$ be the abstract Cartan  torus,
$W$ the  Weyl group, $R_+$  the  set
of positive roots, and  $\rho$ the half-sum of positive roots. Let
\beq{del}
\delta :=\prod\nolimits_{\al\in R_+}\ {(e^{-\al}-1)}\, \in\C[T],
\eeq
and $T_r:=\{t\in T\mid \delta(t)\neq0\}$, a $W$-stable Zariski open subset of $T$.

Let  $\dd_X$ denote the sheaf of differential operators on 
a smooth complex algebraic variety $X$ and
 $\ddx=\Ga(X,\dd_X)$.
 Let  $\C[G]^G\sset \C[G]$ and  $\dd(G)^G\sset \dd(G)$ be the subalgebras of
 $\Ad G$-invariant regular functions and  differential operators on $G$,
 respectively.

 Throughout the paper, we let $W$ act on the algebra $\ddt$ via the
 `dot-action' 
 $w: u\mto w\bdot u$, rather than the natural action $u\mto w^*u$.
 By definition, the  differential operator
 $w\bdot u$ acts on  $f\in \C[T]$   by
 the formula $w\bdot u: f\mto e^{-\rho}\cdot (w^*u)(e^\rho\cdot f)$.
Let $\ddtw\sset \ddt$ be the algebra
 of invariants of the dot-action.
 Note that for zero order differential operators, one has $w\bdot u=w^*u$.
 Thus, the natural algebra imbedding $\C[T]\into \ddt$ respects the $W$-actions

 \rule{4cm}{0.4pt}

 {\footnotesize{\noindent I am  grateful  to Tsao-Hsien Chen for useful discussions
     and comments that
     have led to a number of improvements of the original text.}}
 
\newpage

\noindent
 so, we have $\C[T]^W=\C[T]\cap \ddtw$.

 \hc   extended  the Chevalley
 isomorphism $\C[G]^G\iso \C[T]^W$, to be written as $f\mto f_T$,
 to a `radial parts' homomorphism $r: \dd(G)^G\to\dd(T)^W$
 such that the following equation holds,
 cf. \cite[Section 6]{HC},
  \beq{rad-f} r(u)(\delta\cdot f_T)\eq \delta\cdot (u(f))_T,\qquad\forall u\in\ddg^G,\
 f\in \C[G]^G.
 \eeq
 
It is clear from the equation that   $r(f)=f_T$ for all
$f\in \C[G]^G\subset \ddg^G$.

\begin{rem}\label{rad rem} Equation \eqref{rad-f} may be used as a definition
  of $r(u)$, viewed as a differential operator  on $T_r$.  It is not at all clear,
  however,
  that the  differential operator thus defined has no poles at the divisor $\delta=0$.
  \erem

Let $\g$ and $\t$ be the Lie algebras of $G$ and $T$, respectively.
 Let $\ug$ and $\ut$  denote the corresponding enveloping algebras.
 The algebra $\ut\cong\sym\t$ is naturally identified
 with  the algebra of  translation invariant differential
operators on $T$. Similarly, the center  $Z\g$, of $\ug$, may (and will) be
identified with the algebra
bi-invariant differential
operators on $G$. With these identifications,
the
 restriction  of the radial parts map  to  the subalgebra  $Z\g\sset \ddg^G$
reduces to
the  \hc isomorphism $Z\g\iso (\sym\t)^W$.

The  differential of  the action of $G$
on itself by conjugation sends an element $a\in\g$
to a vector field $\ad a$, on $G$. This gives a map
  $\ad: \g\to\dd(G)$. Let
$\dd(G)\ad\g$ be a left ideal of 
the algebra $\dd(G)$ generated by  the image of the map $\ad$.
Thus, $\nn:=\ \dd(G)/\dd(G)\ad\g$ is
a left $\ddg$-module.\footnote{This $\ddg$-module  has been considered earlier
 by M. Kashiwara \cite{Ka}.}
The algebra $(\End_\ddg \nn)^{op}$ can be   naturally  identified
with $(\ddg/\ddg\ad\g)^G$, a quantum Hamiltonian reduction of
the algebra $\ddg$ with respect to the $\Ad G$-action.
Further, it is immediate from \eqref{rad-f} that the 
map  $r$  factors through a map
$\big(\dd(G)/\dd(G)\ad\g\big)^G\to\ddtw$, furthermore, the resulting map
\beq{ag}\rad:\, (\End_\ddg \nn)^{op}\,=\,\big(\dd(G)/\dd(G)\ad\g\big)^G\,
\xrightarrow\,\ddtw,
 \eeq
is  an algebra  homomorphism.
 An  important result  due to 
 Levasseur and Stafford \cite{LS1}-\cite{LS2}, cf. also
 Wallach  \cite{Wa}, says that the  map \eqref{ag}  is, in fact, an   isomorphism.

One can view $\nn$ as a bimodule with respect
 to the left action of the algebra $\ddg$ and the natural right 
  action of the algebra
 $(\End_\ddg \nn)^{op}$. 
 Therefore,  transporting the right action  via the map $\rad$ gives
  $\nn$ 
  the structure of a $(\ddg,\ddtw)$-bimodule.

  \begin{rem}\label{ls}    The  surjectivity of the map $\rad$ 
     follows from
    the fact, \cite{LS1}, Lemma 9,  that the algebra $\dd(T)^W$ is
 generated by the subalgebras $\C[T]^W$ and $(\sym\t)^W$.
 Proving injectivity is much more difficult.
 \end{rem}

 \subsection{} \label{s not}
  Given a morphism $f: X\to Y$ of smooth varieties, we follow the notation of
  \cite{HTT}, ch.1, and write 
  $f^*(\dash)=\oo_X\lo_{f^\hdot\oo_Y}\, f^\hdot (\dash)$,
  resp.  $\int_f(\dash)=Rf_\idot(\dd_{Y\leftarrow X}\lo_{\dd_X}\, \dash)$, 
for  derived  pull-back,  resp. push-forward, functors on $\dd$-modules.
Let $\int^i_f \ce=\H^i(\int_f\ce)$ denote the $i$th cohomology $\dd_Y$-module.

Let $\coh(\dd_X)$
 be the abelian category of
 coherent $\dd_X$-modules and
  $\dcoh(\dd_X)$  a  full subcategory of the bounded derived category of
 $\dd_X$-modules whose objects $\ce$ have coherent cohomology $\H^\hdot(\ce)$.
 Thus,  $\coh(\dd_X)$ is the heart of the  natural t-structure
 on  $\dcoh(\dd_X)$.
 We also consider  
 the  abelian category  $\coh(\ddx)$ of finitely generated $\ddx$-modules,
 and the corresponding 
  bounded derived category  $\dcoh(\ddx)$.

Let $\bb$ be the flag variety, the variety
of all Borel subgroups $B\sset G$.
We have a diagram
\[
  \xymatrix{
    G\ &&\ 
    \tg:=\{(B,g)\in\bb\times G\mid g\in B\}\  \ar[ll]_<>(0.5){p}\ar[rr]^<>(0.5){q} &&\
    T.
  }
  \]
Here, $p$ is the second projection  
(the Grothendieck-Springer morphism) and the map $q$ is defined 
by the assignment
$(B,g)\, \mapsto g\,{\textrm{mod}} [B,B]$.

The functor of {\em parabolic induction}
is defined by the formula $\int_p   q^*: \dcoh(\dd_T)\to\dcoh(\dd_G)$.

The main result of the paper reads

\begin{thm}\label{main-thm} \vi There is an  isomorphism of functors that makes
  the following diagram commute
\[
\xymatrix{
\dcoh(\dd_T)\ \ar@{=}[d]^<>(0.5){R\Ga(T,\dash)}
\ar@<0.4ex>[rrr]^<>(0.5){\int_p   q^*(\dash)}
&&&\ \dcoh(\dd_G),\ \ar@{=}[d]^<>(0.5){R\Ga(G,\dash)}\\
\dcoh(\ddt)\ \ar@<0.4ex>[rrr]^<>(0.5){\nn\lo_\ddtw(\dash)}
&&&\ \dcoh(\ddg)
}
\]

\vii  The $(\dd(G),\dd(T)^W)$-bimodule $\nn$ is simple.
Furthermore, $\nn$ is  flat as a right
$\ddt^W$-module; hence, the functor $\int_p   q^*$ is t-exact
(with respect to the natural t-structure).
\end{thm}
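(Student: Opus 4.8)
The plan is to reduce everything in part (ii) to two properties of an auxiliary Harish--Chandra $\dd$-module $\M$ on $G\times T$ --- that it is simple and that it is flat over $\dd_T$ --- the second of which is the one substantial point.

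\smallskip
\noindent\emph{Step 1 (t-exactness from flatness).} Assume $\nn$ is flat as a right $\ddtw$-module. Since $G$ and $T$ are affine, $R\Ga$ is exact and $\coh(\dd_X)\iso\coh(\dd(X))$ for $X\in\{G,T\}$, so the right vertical equivalence in the diagram of Theorem~\ref{main-thm}(i) reflects the natural $t$-structure. For $\ce\in\coh(\dd_T)$ one has $R\Ga(T,\ce)=\Ga(T,\ce)$ in degree $0$, hence $\nn\lo_{\ddtw}\Ga(T,\ce)=\nn\o_{\ddtw}\Ga(T,\ce)$ is in degree $0$; by Theorem~\ref{main-thm}(i), $R\Ga(G,\int_p q^*\ce)$ is in degree $0$, and therefore so is $\int_p q^*\ce\in\coh(\dd_G)$. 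Applying this to a short exact sequence of coherent $\dd_T$-modules and using the cohomology long exact sequence of the associated triangle shows $\int_p q^*$ is $t$-exact. (Running the computation in reverse shows that flatness of $\nn$ is in fact \emph{equivalent} to the left $t$-exactness of $\int_p q^*$, the homological counterpart of a geometric vanishing.)

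\smallskip
\noindent\emph{Step 2 (reduction to $\M$).} Put $\mathbb M:=\nn\o_{\ddtw}\ddt$, a $(\dd(G),\ddt)$-bimodule, cyclic over $\Lambda:=\dd(G)\o\ddt$ (generated by $\bar1\o1$). The dot-action of $W$ on $\ddt$ fixes $\ddtw$, so the Reynolds operator $\epsilon\colon\ddt\to\ddtw$, $u\mapsto\tfrac1{|W|}\sum_{w\in W}w\bdot u$, is right $\ddtw$-linear with $\epsilon(1)=1$; thus $\ddt=\ddtw\oplus\ker\epsilon$ as $(\ddtw,\ddtw)$-bimodules, $\mathbb M^W=\nn\o_{\ddtw}\ddtw=\nn$, and any left $\ddtw$-module $P$ is a direct summand, as a left $\ddtw$-module, of $(\ddt\o_{\ddtw}P)|_{\ddtw}$. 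I claim $\mathbb M=\Ga(G\times T,\M)$ for a \emph{simple} holonomic $\dd_{G\times T}$-module $\M$ --- the group analogue of the Harish--Chandra $\dd$-module on $\g\times\t$ --- which is moreover \emph{flat over $\dd_T$}. Granting this: first, $\M$ simple and $G\times T$ affine give $\mathbb M$ simple over $\Lambda$, hence over the skew group ring $\Lambda\rtimes W$; since $e:=\tfrac1{|W|}\sum_w w$ satisfies $e(\Lambda\rtimes W)e\cong\Lambda^W=\dd(G)\o\ddtw$, the functor $N\mapsto eN$ sends the simple $\Lambda\rtimes W$-module $\mathbb M$ to the simple $(\dd(G),\ddtw)$-bimodule $e\mathbb M=\mathbb M^W=\nn$ --- the simplicity assertion of part (ii). Second, flatness of $\M$ over $\dd_T$ says precisely that $\mathbb M\o_\ddt(-)=\nn\o_{\ddtw}(-)$ is exact on left $\ddt$-modules, i.e.\ $\Tor^{\ddtw}_{>0}(\nn,L)=0$ for every left $\ddt$-module $L$; applying this to $L=\ddt\o_{\ddtw}P$ and invoking the direct-summand remark gives $\Tor^{\ddtw}_{>0}(\nn,P)=0$ for every left $\ddtw$-module $P$. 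Hence $\nn$ is right $\ddtw$-flat, and with Step 1 this finishes part (ii).

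\smallskip
\noindent\emph{Step 3 (flatness of $\M$: the main obstacle).} Take $\M:=\int_{(p,q)}\oo_{\tg}$ for $(p,q)\colon\tg\to G\times T$ the combined map; up to a cohomological shift and a twist by a rank-one local system (the $\rho$-shift built into the dot-action), $\M$ is identified with $\mathrm{IC}(Z)$, $Z:=\overline{(p,q)(\tg)}$, which is simple because $Z$ is irreducible and $(p,q)$ is generically one-to-one --- this supplies the simplicity input above. For flatness over $\dd_T$: equip $\M$ with a good filtration compatible with the order filtrations on $\ddg$ and $\ddt$; since the associated graded rings are Noetherian it suffices that $\gr\mathbb M$ be flat over $\gr\ddt=\C[T^*T]=\C[T\times\t^*]$. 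As $T\times\t^*$ is smooth, by the miracle-flatness criterion for modules this comes down to two statements about $\mathrm{Ch}(\M)=\supp\gr\mathbb M\subseteq T^*(G\times T)$: (a) the projection $\mathrm{Ch}(\M)\to T^*T$ is equidimensional with fibres of dimension $\dim\mathrm{Ch}(\M)-\dim T^*T=\dim G-\dim T$; and (b) $\gr\mathbb M$ is Cohen--Macaulay over $\C[T^*(G\times T)]$ --- e.g.\ because $\mathrm{Ch}(\M)$ is a Cohen--Macaulay variety carrying $\gr\mathbb M$ as its structure sheaf. By the standard formula for the characteristic variety of a proper pushforward, $\mathrm{Ch}(\M)$ is the closure of an explicit ``group Harish--Chandra commuting variety''; over the regular semisimple locus it is the conormal to the \'etale cover $\{(g,t):t$ is an eigenvalue of $g\}$ of that locus, where (a)--(b) are transparent (the fibre over a point of $T^*T$ being the $(\dim G-\dim T)$-dimensional set of group elements with a prescribed regular eigenvalue). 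The real difficulty --- and here the Remark that $r(u)$ might \emph{a priori} have poles on $\{\delta=0\}$ casts its shadow --- is to control $\mathrm{Ch}(\M)$ over the divisor $\{\delta=0\}$: to show no extraneous component appears over a proper subvariety of $T\times\t^*$ and that Cohen--Macaulayness and equidimensionality persist there. I would attack this either by realizing the relevant variety, via an appropriate resolution or transverse slice, as locally defined by a regular sequence, or by transferring along the exponential map to the Lie algebra $\g\times\t$ --- an \'etale-local reduction preserving flatness, where the needed properties of the Harish--Chandra system are within reach of the Hotta--Kashiwara / Levasseur--Stafford circle of results.
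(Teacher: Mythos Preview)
Your Steps~1 and~2 match the paper's reduction: the paper introduces $\mm=\nn\o_{\ddtw}\ddt$, identifies it with the global sections of the simple holonomic module $\cmm=\int_{p\times q}\oo_{\tg}$ (Theorem~\ref{pimm}), and reduces both simplicity and $\ddtw$-flatness of $\nn$ to the corresponding statements for $\mm$ over $\ddt$ via the flatness of $\ddt$ over $\ddtw$ (Proposition~\ref{eg}, equation~\eqref{mno}). Your Morita argument for simplicity cleanly fills in a step the paper leaves implicit. (Note that part~(i) itself needs proof --- in the paper this is Lemma~\ref{xyz}(1) combined with the identification~\eqref{M}; you invoke it but do not supply it.)

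Step~3 is where you part ways with the paper, and there is a genuine gap. You propose to pass to the associated graded and apply miracle flatness, which requires that $\gr\mm$ be Cohen--Macaulay and equidimensional over $\C[\tt]$. But Proposition~\ref{cc} only gives $\CC(\cmm)=[\overline{N_\fy}]$, i.e.\ generic multiplicity one; it does \emph{not} say $\gr\cmm=\oo_{\overline{N_\fy}}$, and the ambient scheme $\xx$ surjecting onto $\gr\cmm$ is already non-reduced for $SL_2$. More tellingly, the paper explicitly flags (Remark~\ref{z flat}(ii)) that the natural commutative analogue of the flatness statement is \emph{false}: the map $\zz\to(\tt)/W$ is not flat, and the $\ddtw$-flatness of $\nn$ is called ``somewhat surprising''. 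So the associated-graded route, even if it could be made to work, would need substantial new input on the geometry of the isospectral commuting variety that you have not supplied; your closing ``I would attack this\ldots'' is where the proof actually lives.

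The paper's argument is quite different and avoids the associated graded entirely. The key observation (Lemma~\ref{xyz}(2)) is that $\cmm$ is flat over $\fq^\hdot\oo_T$ --- a much weaker condition than flatness over $\dd_T$. This follows from a non-characteristic argument: since $p$ is proper and $q$ is smooth, one has $\SSS(\cmm)\cap(\T^*_GG\times\T^*T)\subseteq\T^*_{G\times T}(G\times T)$, so $\cmm$ is non-characteristic for every closed subvariety of the form $G\times V$, and the higher $\oo_T$-Tor's vanish by the standard theory. Exactness of $\int_\fp(\cmm\o_{\fq^\hdot\oo_T}\fq^\hdot(-))$ then comes from a separate elementary fact (Lemma~\ref{fin mor}): since $\supp\cmm=G\times_\fc T$ is \emph{finite} over $G$, the pushforward $\int_\fp$ is exact on modules supported there, even non-holonomic ones --- this is proved by a direct computation with the Euler operator $t\frac{d}{dt}$. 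Flatness of $\mm$ over $\ddt$ is then read off from the exactness of the composite functor. This two-step mechanism --- $\oo_T$-flatness via non-characteristic, then an exact-pushforward lemma for finite support --- is the idea your Step~3 is missing.
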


An important class of  $\dd_T$-modules comes from local systems.
Specifically, associated with a $\sym\t$-module $V$ one has a
$\dd_T$-module $\dd_T\o_{\sym \t} V$, which is isomorphic
to  $\oo_T\o V$ as an  $\oo_T$-module.
The $\dd_T$-action gives a flat connection on  $\oo_T\o V$;
the sheaf of {\em holomorphic} horizontal sections
of that connection is a local system on $T$.

From Theorem \ref{main-thm} we will deduce (see Section \ref{d sub}) the following result.

\begin{cor}\label{mla} Let  $\la\in \t^*=\Spec(\sym\t)$ be a regular point of 
  the dot-action of $W$, e.g. $\la=0$. Then, for
  any finite dimensional $\sym\t$-module $V$
  such that $\supp V=\{\la\}$,  there is an isomorphism
  \begin{align*}
      \Ga\big(G,\,\mbox{$\int^0_p$}\,  q^*(\dd_T\o_{\sym \t} V)\big)\ccong
      \nn\o_{(\sym\t)^W} V,
        \end{align*}
of $\ddg$-modules; furthermore, $\int^i_p  q^*(\dd_T\o_{\sym \t} V)=0$  for all $i\neq 0$.
      \end{cor}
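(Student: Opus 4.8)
The plan is to deduce everything from Theorem~\ref{main-thm}: the vanishing is immediate, while the description of $\int^0_p q^*$ reduces to an algebraic identity that I would prove using the geometry of $\Spec\sym\t\to\Spec(\sym\t)^W$ near the regular point $\la$. First I would apply Theorem~\ref{main-thm}(i) to $\mathcal{E}:=\dd_T\o_{\sym\t}V$. By PBW, $\dd_T\cong\oo_T\o_\C\sym\t$ as a sheaf of right $\sym\t$-modules, so $\mathcal{E}\cong\oo_T\o_\C V$ is an $\oo_T$-coherent $\dd_T$-module and, $T$ being affine, $R\Ga(T,\mathcal{E})=\dd(T)\o_{\sym\t}V$ in degree $0$; hence $R\Ga(G,\int_p q^*\mathcal{E})\cong\nn\lo_{\dd(T)^W}(\dd(T)\o_{\sym\t}V)$. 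Since $\int_p q^*$ is $t$-exact (Theorem~\ref{main-thm}(ii)) and $\mathcal{E}$ lies in the heart, $\int^i_p q^*\mathcal{E}=0$ for $i\neq0$. Since $\nn$ is flat over $\dd(T)^W$ (Theorem~\ref{main-thm}(ii)) the complex on the right is concentrated in degree $0$; and as $G$ is $\dd$-affine and $\int^0_p q^*\mathcal{E}$ is a coherent $\dd_G$-module it is recovered from its global sections, so $\Ga(G,\int^0_p q^*\mathcal{E})\cong\nn\o_{\dd(T)^W}(\dd(T)\o_{\sym\t}V)$. It remains to identify this with $\nn\o_{(\sym\t)^W}V$, and for that it suffices to exhibit an isomorphism of left $\dd(T)^W$-modules $\dd(T)\o_{\sym\t}V\cong\dd(T)^W\o_{(\sym\t)^W}V$, since then $\nn\o_{\dd(T)^W}(\dd(T)^W\o_{(\sym\t)^W}V)=\nn\o_{(\sym\t)^W}V$ by associativity.

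For this last isomorphism I would exploit that $\la$ being a regular point of the dot-action is exactly the statement that the finite map $\Spec\sym\t\to\Spec(\sym\t)^W$ is \'etale at $\la$. Let $\overline\la$ be the image of $\la$; then the fibre $W\bdot\la$ consists of $|W|$ reduced points, and for every $n$ one has $\sym\t/\m_{\overline\la}^n\sym\t=\bigoplus_{\mu\in W\bdot\la}\sym\t/\m_\mu^n$, with $\wh{(\sym\t)^W}_{\overline\la}\iso\wh{\sym\t}_\mu$ for each $\mu$. Applying $\dd(T)\o_{\sym\t}(\dash)$ gives a decomposition of left $\dd(T)$-modules $\dd(T)/\dd(T)\m_{\overline\la}^n=\bigoplus_{\mu\in W\bdot\la}\dd(T)/\dd(T)\m_\mu^n$ on which the dot-action of $W$, which fixes the ideal $\m_{\overline\la}\sset(\sym\t)^W$, acts by $\dd(T)^W$-linear automorphisms permuting the $|W|$ summands simply transitively. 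Taking $W$-invariants — the averaging projector gives $(\dd(T)\m_{\overline\la}^n)^W=\dd(T)^W\m_{\overline\la}^n$, and the invariants of a simply transitive permutation module recover a single summand — yields an isomorphism of left $\dd(T)^W$-modules $\dd(T)^W/\dd(T)^W\m_{\overline\la}^n\iso\dd(T)/\dd(T)\m_\la^n$, compatible with the right actions of $\mathcal R_n:=(\sym\t)^W/\m_{\overline\la}^n$ and of $\sym\t/\m_\la^n$ under the \'etale identification $\mathcal R_n\iso\sym\t/\m_\la^n$. For $n$ large $V$ is a module over $\mathcal R_n$ (it is supported at $\la$), and tensoring the displayed isomorphism over $\mathcal R_n$ with $V$ gives $\dd(T)\o_{\sym\t}V\cong\dd(T)^W\o_{(\sym\t)^W}V$, as desired.

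I expect the main obstacle to be precisely this last isomorphism: one must verify carefully that the dot-action of $W$ on $\dd(T)/\dd(T)\m_{\overline\la}^n$ is by $\dd(T)^W$-linear automorphisms permuting the summands simply transitively, that the formation of $W$-invariants is exact and commutes with $\dd(T)\o_{\sym\t}(\dash)$, and that the resulting identification respects both the left $\dd(T)^W$-action and the right $\mathcal R_n$-action. The other ingredients are routine: $\dd_T$ is flat over $\sym\t$; $\sym\t$ is faithfully flat over $(\sym\t)^W$, so $(\sym\t)^W\cap\m_{\overline\la}^n\sym\t=\m_{\overline\la}^n$ and $\mathcal R_n$ embeds in $\sym\t/\m_{\overline\la}^n\sym\t$; ``regular point of the dot-action'' is exactly ``\'etale point of $\Spec\sym\t\to\Spec(\sym\t)^W$''; and $G$ is $\dd$-affine. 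With these in hand the corollary is a formal consequence of Theorem~\ref{main-thm}.
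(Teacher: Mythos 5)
Your proposal is correct, and its overall skeleton coincides with the paper's: both reduce the corollary, via Theorem~\ref{main-thm} and $\dd$-affinity of $G$ and $T$, to the single algebraic identity $\dd(T)\o_{\sym\t}V\cong\dd(T)^W\o_{(\sym\t)^W}V$ of left $\dd(T)^W$-modules, after which $\nn\o_{\dd(T)^W}(\dash)$ and associativity finish the argument. Where you diverge is in the proof of that identity. The paper never passes to quotients by powers of maximal ideals: it uses regularity of $\la$ only to write $S\o_{S^W}V\cong\oplus_{w\in W}V^w$ as $W\ltimes S$-modules (with $S=\sym\t$), identifies $\oplus_w\dd\o_S V^w\cong(W\ltimes\dd)\o_S V$, invokes Proposition~\ref{eg}(iii) to rewrite $W\ltimes\dd$ as $\dd\o_{\dd^W}\dd$, and then takes $W$-invariants using the Morita equivalence of Proposition~\ref{eg}(i), so that the delicate points you flag (exactness of invariants, compatibility of the two module structures, recovering a single summand) are absorbed into already-established Morita formalism. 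Your version instead realizes the same simply-transitive orbit decomposition at the level of $\dd(T)/\dd(T)\m_{\overline\la}^n$ and verifies the invariants computation by hand via the averaging idempotent; this is more self-contained and makes the \'etale-local geometry explicit, at the cost of the bookkeeping you yourself identify (the identification $(\dd(T)\m_{\overline\la}^n)^W=\dd(T)^W\m_{\overline\la}^n$, the $\dd(T)^W$-linearity of the permutation of summands, and the matching of right $\mathcal R_n$- versus $\sym\t/\m_\la^n$-actions), all of which do go through. The paper's route is shorter because Proposition~\ref{eg} is needed elsewhere anyway; yours would be preferable if one wanted to avoid the smash-product Morita machinery.
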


      Let $I_\la\subset \sym\t$ be the maximal ideal of  a regular point $\la\in\t^*$
      and identify  $Z\g$ with $(\sym\t)^W$.
         In the special case where 
      $V=S/I_\la$                      the corollary says that there is an isomorphism
      \beq{hk iso} \int_p  q^*(\dd_T/\dd_T I_\la)\ccong
        \dd_G\big/\big(\dd_G\ad\g + \dd_G\cdot (Z\g\cap I_\la)\big).
        \eeq

\begin{rems}\label{byk} \vi    Bezrukavnikov  and Yom ~Din  \cite{BY}
  have shown that, in the general case of an arbitrary parabolic,
  the functor of   parabolic induction
  (as well as  parabolic restriction, cf.
Section \ref{pf-sec})  is an exact functor between equivariant derived categories.
The methods of \cite{BY}  are
different from ours; in particular,  the arguments in {\em loc cit} 
depend on  the  `second adjointness'  theorem of Drinfeld and Gaitsgory \cite{DG}
and on  results of Raskin \cite{Ra} on holonomic defects of $\dd$-modules.
\vskip2pt

\vii The flatness statement in Theorem \ref{main-thm}(ii)
is somewhat surprising since 
a natural commutative counterpart of this statement  is false,
cf. Remark \ref{z flat}(ii).
A weaker flatness result 
has been established earlier by Kashiwara
\cite{Ka}.  Specifically,   Kashiwara
proved  that $\nn$ is flat as a right $Z\g$-module, where
$Z\g$ is identified with the commutative subalgebra  $(\sym\t)^W\sset\ddt^W$.
\vskip2pt

\viii     An analogue  of  isomorphism \eqref{hk iso}  in the setting of
$\dd$-modules  on the Lie algebra $\g$ is one of the main results 
 of Hotta and Kashiwara \cite{HK1}.
 Levasseur and Stafford \cite{LS2}
 proved that $\dd(\g)/\dd(\g)\ad\g$ is flat as a right
 $(\sym\g)^G$-module, an analogue   of the flatness result  from \cite{Ka}
 in the  Lie algebra  setting.

 \iv Some constructions closely related to Theorem \ref{main-thm} have been considered
earlier by  D. ~Ben-Zvi and S. Gunningham,  \cite{BZG}.

\vv  Harish-Chandra's original construction
    of the radial parts map $r$  can be translated into modern language 
    as follows.
        Fix a Borel subgroup $B\sset G$ with Lie algebra
    $\b$ and let  $B$ act on $G$ by conjugation and act  on $T$ trivially.
We have a diagram $T=B/[B,B]\xleftarrow{\al} B
\xrightarrow{\beta} G$, of  $B$-equivariant maps.
The space ${L}=\Ga(G, \int_\beta \al^*\dd_T)$ has the natural structure of a
$(\ddg,\ddt)$-bimodule.  Furthermore, 
$\int_\beta \al^*\dd_T$ is strongly equivariant as a left $\dd_G$-module
and the resulting  $B$-action on $L$
 commutes with the  right $\ddt$-action.
    Let $\gamma=2\rho$ be the sum of positive roots
        viewed as a character  of $B$,
         and ${L}_{\gamma}\subset L$
        the ${\gamma}$-weight space of the $B$-action.

         The bimodule $L$ comes equipped with a  canonical section $s_{L}=(dg)\inv db$,
        where we use the notation
        $dk$ for a left invariant volume form on an algebraic group $K$.
      The section $s_{L}$ has weight ${\gamma}$,
    hence the right action of $\ddt$ on ${L}$ gives a map
    $\ddt\to {L}_{\gamma},\, D\mto s_{L} D$.
        Using an associated graded of ${L}$ with respect to a natural good filtration,
    it is not difficult to show
that   this map is actually a bijection.
    It follows that the image  of the map $D\mto s_{L} D$ is  $\ddg^G$-stable.
    We deduce that for every $u\in\ddg^G$ there is a unique
    differential operator $r(u)\in \ddt$
    such that, inside $L$, one has
    $u s_{L}=s_{L}\,r(u)$.
    It is immediate 
    that the assignment $u\mto r(u)$
    gives an algebra homomorphism $r: \ddg^G\to \ddt$.
        From the $G$-invariance of $u$ one
    deduces  that $r(u)$ is a $W$-invariant
    operator and   formula \eqref{rad-f} holds. 
  \end{rems}

\section{The \hc $\dd$-module}\label{hc-sec} 
A key role in our approach to parabolic induction  is played by
the \hc  module, a   left  $\dd_{\gt}$-module
defined as follows:
\beq{cmm}\cmm:=
\dd_\gt\big/\big(\dd_\gt\,(\ad\g\o 1)+\dd_\gt\,\{u\o1-1\o \rad(u),\ u\in \dd(G)^G\}\big).
\eeq

The following  result,  essentially due to Hotta and Kashiwara
\cite{HK1}, provides a geometric interpretation of the \hc module in terms of the diagram 
 $G\xleftarrow{p}\wt G \xrightarrow{q} ~T$.

\begin{thm}\label{pimm} There is an isomorphism
  $\int^0_{\pq}\oo_\tg\ccong\cmm$;
  furthermore, $\int^i _{\pq}\oo_\tg=0$ for all $i\neq 0$.
\end{thm}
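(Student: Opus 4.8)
The plan is to compute the pushforward $\int_{p\times q}\oo_\tg$ along the map $p\times q\colon \tg\to G\times T$ directly, using the fact that $\tg$ is the total space of a fiber bundle over $\bb$ and that $p\times q$ factors through the $G$-action. First I would record that $\tg$ fibers over $\bb$ with fiber over $B$ equal to $B$ itself (sitting inside $G$), and that the composite map to $G\times T$ sends $(B,g)$ to $(g,\, g\bmod[B,B])$. Equivalently, writing $\tg \cong G\times_B B = G\times_B \b$-group, the map to $G$ is the adjoint-type map and the map to $T$ records the semisimple part relative to $B$. The key structural observation is that $p\times q$ is the Grothendieck–Springer-type resolution of the variety of pairs, and the relevant statement is parallel to the Lie-algebra computation of Hotta–Kashiwara \cite{HK1}: there one shows that the pushforward of the structure sheaf of $\wt\g$ along the analogous map is concentrated in degree $0$ and equals the Lie-algebra Harish-Chandra module.

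The main steps would be: (1) Reduce to a statement over the open locus $T_r\subset T$ (where $\delta\neq0$) together with a statement about no higher cohomology and no poles. Over $T_r$, the map $p\times q$ restricted to $(p\times q)^{-1}(G\times T_r)$ is (generically) finite étale onto its image — in fact a $W$-torsor over the regular semisimple locus — so the pushforward is visibly a vector bundle with connection there, and one identifies it with $\cmm|_{G\times T_r}$ by comparing with the radial parts homomorphism $\rad$ of \eqref{ag} and the defining relations \eqref{rad-f}. (2) For the vanishing $\int^i_{p\times q}\oo_\tg=0$, $i\neq0$, use that $p$ is projective and that $q^*$ of anything is a nice $\dd$-module; more precisely $\oo_\tg = q^*\oo_T$ up to the relative structure, and one invokes properness of $p$ together with a dimension/semismallness estimate for the fibers of $p\times q$ — the fibers are (products involving) Springer fibers, and the relevant cohomological amplitude is controlled exactly as in \cite{HK1}. (3) Identify the degree-$0$ pushforward with $\cmm$ globally: there is a natural surjection from $\int^0_{p\times q}\oo_\tg$ (or a natural map in the other direction) obtained by noting that $\oo_\tg$ is annihilated, after pushforward, by $\ad\g\o1$ (because $p\times q$ is $\Ad G$-equivariant with trivial action downstairs in the second factor and conjugation in the first) and by the operators $u\o1 - 1\o\rad(u)$ (this is precisely the content of the Harish-Chandra radial-parts identity \eqref{rad-f}, which says $u$ acting on $G$-invariant functions matches $\rad(u)$ acting on $T$ after the $\delta$-twist). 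This produces a map $\cmm\to \int^0_{p\times q}\oo_\tg$ or its inverse; one then checks it is an isomorphism by restricting to $T_r$, where both sides were identified in step (1), and arguing that neither side has sections supported on the divisor $\delta=0$ — for $\cmm$ this is a holonomicity/no-embedded-components argument, and for the pushforward it follows from the explicit local structure of $p\times q$ along that divisor.

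The hard part will be step (3), specifically ruling out extra sections supported on the boundary divisor $\{\delta=0\}\subset T$ on both sides and thereby upgrading the generic isomorphism to a global one. This is exactly the subtlety flagged in Remark \ref{rad rem}: a priori $\rad(u)$ could have poles along $\delta=0$, and correspondingly the pushforward could acquire extra structure there. The way around this is to use the Levasseur–Stafford isomorphism \eqref{ag} (which guarantees $\rad$ lands in $\dd(T)^W$ with no poles) together with a good-filtration/associated-graded argument — computing the characteristic variety of $\cmm$ and of $\int^0_{p\times q}\oo_\tg$ and checking they agree and contain no component over $\{\delta=0\}$ of the wrong dimension. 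Once the characteristic varieties match, the generically-defined isomorphism extends, and Theorem \ref{pimm} follows. I would also remark that this is the $\dd$-module incarnation of the classical fact, recalled in Remark \ref{byk}(v), that $L_\gamma \cong \dd(T)$ via the canonical section $s_L$; the present theorem is the globalized, non-$B$-equivariant version of that statement.
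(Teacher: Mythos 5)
Your overall strategy is the paper's: restrict to the regular semisimple locus, build the comparison map out of the canonical section and the radial-parts identity \eqref{rad-f}, and promote the generic isomorphism to a global one by a characteristic-variety argument. Two points of comparison are worth recording. First, for the vanishing of $\int^i_{p\times q}\oo_\tg$, $i\neq 0$, you propose smallness of $p\times q$ onto $G\times_\fc T$; the paper instead derives it from the bound $\SSS\big(\int^i_{p\times q}\oo_\tg\big)\subseteq\overline{N_\fy}$ together with the vanishing of the restriction to $G_{rs}\times T_r$ (an irreducible Lagrangian cannot properly contain the characteristic variety of a nonzero coherent $\dd$-module). The smallness route is legitimate and is in fact noted in Remark \ref{z flat}(i) as an alternative; note, though, that over the regular locus $p\times q$ is an \emph{isomorphism} onto the fiber product $\fy=G_{rs}\times_{\fc}T_r$, not a $W$-torsor onto its image (the $W$-torsor is $\tg_{rs}\to G_{rs}$), so the pushforward to $G\times T$ is supported on $\fy$ rather than being a vector bundle with connection on $G_{rs}\times T_r$.

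Second, and more substantively: your ``hard part'' correctly identifies that everything hinges on computing $\CC(\cmm)$ and showing it carries nothing extra over $\{\delta=0\}$, but you leave this as a black box. This is precisely where the paper's real work lies: Lemma \ref{simple} identifies $N_\fy$ as an open dense subscheme of the isospectral commuting scheme $\xx$, and the filtration argument of Proposition \ref{cc} (the ideal $\gr\J$ contains the ideal of $\xx$, giving $\oo_\xx\onto\gr\cmm$, whence $\CC(\cmm)=[\overline{N_\fy}]$ by irreducibility of the Lagrangian $\overline{N_\fy}$) is the decisive input; without it the ``no sections supported on $\delta=0$'' claim for $\cmm$ is unsupported. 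Finally, you should not route this through the Levasseur--Stafford isomorphism: what is needed is only that $r(u)$ is a genuine $W$-invariant differential operator on $T$ with no poles along $\delta=0$, which is Harish-Chandra's construction recalled in Remark \ref{byk}(v); the paper explicitly points out (Remark \ref{indep}) that the proof of Theorem \ref{pimm} does not use the injectivity of $\rad$, which is the hard content of \cite{LS1}--\cite{LS2}.
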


\begin{rem}
In   \cite{HK1}, the authors considered a Lie algebra version of the above theorem, 
cf. \cite{HK2} and  \cite{Ka} for closely related results concerning $\int_p \oo_\tg$.
The strategy of the proof of  Theorem \ref{pimm} outlined below
is similar to the strategy used in   \cite{HK1}.
There are, however, two differences. First, 
the arguments in  \cite{HK1} involve Fourier transform,
which is not available
in the group setting.
The second difference is that the definition
of the Lie algebra counterpart of the \hc module  given in \cite{HK1}
doesn't quite match formula \eqref{cmm}. 
Specifically, the   elements of the form
$u\o1-1\o \rad(u)$ that appeared in  the definition   in {\em loc cit} are
the ones where $u$ is taken to be  either an element of  the subalgebra
$\C[\g]^G\sset \dd(\g)^G$ or of the subalgebra 
$(\sym\g)^G\sset \dd(\g)^G$,
 of $G$-invariant differential operators  with {\em constant} coefficients.
  The difference between the (Lie algebra analogue of)
 \eqref{cmm} and the formula in  \cite{HK1}  doesn't affect the resulting $\dd$-module,
 thanks to \cite[Lemma 9]{LS1}, cf. Remark \ref{ls}.
\erem

The rest of this subsection is devoted to the proof of Theorem \ref{pimm}.

Let $G_{rs}\sset G$ be the regular semisimple locus of $G$.
 Let  $\fc=\Spec\C[G]^G$. We consider  a fiber product $G\times_{\fc} T$ and its open subset
 $\fy:=G_{rs}\times_{\fc} T_r$, which is a smooth  connected
 closed subvariety of $G_{rs}\times T_r$.
Let $N_\fy\sset \T^*(G_{rs}\times T_r)$ be (the total space of)  the conormal bundle on
this subvariety,
$\overline{N_\fy}$ the closure of $N_\fy$ in  $\T^*(G\times T)$,
and $[\overline{N_\fy}]$  the fundamental class of $\overline{N_\fy}$.

Let $\SSS(-)$, resp.  $\CC(-)$, denote the characteristic variety,
resp. characteristic cycle, of a $\dd$-module.

 \begin{prop}\label{cc} One has $\CC(\cmm)=[\overline{N_\fy}]$.
   Further, 
   for all $i$, we have  $\SSS\big(\int^i_{p\times q}\oo_\tg\big)\subseteq\overline{N_\fy}$.
     \end{prop}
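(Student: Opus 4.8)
The plan is to describe $\cmm$ and $\int_\pq\oo_\tg$ over the open dense subset $G_{rs}\times T_r$, where the subvariety $\fy$ lives, and then to propagate the resulting bounds to all of $G\times T$ by combining the standard estimate for characteristic varieties under a proper direct image with a dimension count for a commuting-variety-type locus over the adjoint quotient. For the first part, recall that a regular semisimple $g\in G$ lies in a unique maximal torus and the Borel subgroups containing it form a single $W$-orbit; matching this against the presentation of $\fy\to G_{rs}$ as an étale $W$-cover, I would check that $(\pq)^{-1}(G_{rs}\times T_r)=p^{-1}(G_{rs})$ and that $\pq$ restricts there to a proper, birational, bijective morphism onto $\fy$ — hence, $\fy$ being normal, to an isomorphism $p^{-1}(G_{rs})\iso\fy$ by Zariski's main theorem. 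It follows that the restriction of $\int_\pq\oo_\tg$ to $G_{rs}\times T_r$ is the $\dd$-module direct image of $\oo_\fy$ along the closed embedding $\fy\into G_{rs}\times T_r$; in particular it is concentrated in degree $0$ and has characteristic cycle $[N_\fy]$. For $\cmm$, the order-zero defining relations $f\o1-1\o f_T$ with $f\in\C[G]^G$ already confine the support of $\cmm|_{G_{rs}\times T_r}$ to $\fy$, so by Kashiwara's equivalence this restriction is the direct image of a cyclic $\dd_\fy$-module; the remaining relations, which over the regular semisimple locus reduce to Harish-Chandra's explicit description of the radial-parts map (cf. Remark \ref{byk}(v)), identify that module with a rank-one integrable connection, so that $\CC(\cmm)|_{G_{rs}\times T_r}=[N_\fy]$ as well.

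Next I would prove the global inclusion $\SSS(\int^i_\pq\oo_\tg)\subseteq\overline{N_\fy}$ for all $i$. The morphism $\pq$ is proper, since it factors as the closed embedding $\tg\into\bb\times G\times T$, $(B,g)\mapsto(B,g,\,g\,\mathrm{mod}\,[B,B])$, followed by the projection $\bb\times G\times T\to G\times T$. Hence the characteristic-variety estimate for a proper direct image shows, for every $i$, that $\SSS(\int^i_\pq\oo_\tg)$ lies in the conormal variety $\Lambda$ of $\pq$: the closed conic subset of $\T^*(G\times T)$ consisting of pairs $(y,\xi)$ with $y=(\pq)(x)$ for some $x\in\tg$ and $\xi$ vanishing on the image of $d(\pq)_x$. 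By the first paragraph $\Lambda\cap\T^*(G_{rs}\times T_r)=N_\fy$, so $\overline{N_\fy}$ is an irreducible component of $\Lambda$, and it remains to rule out any other component. I would do this by bounding, over the strata of $G\smallsetminus G_{rs}$, the dimension of the locus where $d(\pq)$ drops rank together with the dimensions of the corresponding fibres of $\pq$ — using the geometry of the Grothendieck--Springer map — so as to conclude that every component of $\Lambda$ has dimension at most $\dim(G\times T)$ and meets $\overline{N_\fy}$; hence $\Lambda=\overline{N_\fy}$, and in particular $\int_\pq\oo_\tg$ is holonomic.

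For the characteristic cycle of $\cmm$ I would proceed similarly. Equip $\cmm$ with the good filtration induced by the order filtration of $\dd_\gt$; then $\SSS(\cmm)\subseteq Z$, the common zero locus in $\T^*(G\times T)=\T^*G\times\T^*T$ of the principal symbols of the defining relations, and by Remark \ref{ls} it suffices to use the relations indexed by $u\in\C[G]^G$ and by $u\in Z\g$. The symbols of the operators $\ad a\o1$ ($a\in\g$) cut out $\mu^{-1}(0)\times\T^*T$, where $\mu\colon\T^*G\to\g^*$ is the moment map for the conjugation action; the order-zero symbols $f\o1-1\o f_T$ ($f\in\C[G]^G$) cut out the part lying over $G\times_\fc T$; and the symbols of $u\o1-1\o\rad(u)$ ($u\in Z\g$), which through the Chevalley and Harish-Chandra isomorphisms become the $W$-invariant polynomials on $\g^*$ minus their restrictions to $\t^*$, impose that the $\g^*$- and $\t^*$-components be $W$-conjugate. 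A dimension count shows each of these successive conditions to be complete-intersection-like, so $Z$ is equidimensional of dimension $\dim(G\times T)$; together with an irreducibility statement for the relevant commuting-type fibre product this gives $Z=\overline{N_\fy}$, hence $\SSS(\cmm)\subseteq\overline{N_\fy}$. Since $\overline{N_\fy}$ is irreducible while, by the first paragraph, $\SSS(\cmm)\supseteq N_\fy$, it follows that $\SSS(\cmm)=\overline{N_\fy}$; and the first paragraph also shows the generic multiplicity to be $1$, whence $\CC(\cmm)=[\overline{N_\fy}]$.

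The step I expect to be the main obstacle is the passage from the open locus $G_{rs}\times T_r$ to all of $G\times T$ in the two preceding paragraphs, namely ruling out ``extra'' Lagrangian components of $\Lambda$, respectively $Z$, lying over the non-regular-semisimple locus. This requires genuine control of the singularities of the Grothendieck--Springer map; it is the group-theoretic counterpart of the characteristic-variety estimates of Hotta and Kashiwara \cite{HK1}, whose Fourier-transform shortcut in the Lie-algebra setting is not available on the group.
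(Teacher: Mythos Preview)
Your outline for $\CC(\cmm)$ is essentially the paper's: the symbol ideal of the presentation \eqref{cmm} cuts out a scheme the paper calls $\xx$---the fibre product of the commuting scheme $\zz\subset G\times\g$ with $T\times\t$ over $\Spec\C[\zz]^G$---and the key input is that $N_\fy$ is Zariski open and \emph{dense} in $\xx$ (Lemma~\ref{simple}, whose proof is referred to \cite[Lemma~4.2.1]{HK1} and \cite[Lemma~1.5]{Gi}). This density is precisely the ``irreducibility statement for the relevant commuting-type fibre product'' you invoke; it is the substantive step, not a routine complete-intersection dimension count, and the paper does not stratify $G\smallsetminus G_{rs}$ to get it. Once $\SSS(\cmm)\subseteq\xx_{\text{red}}=\overline{N_\fy}$ is known, irreducibility of $\overline{N_\fy}$ as a Lagrangian and generic reducedness of $\xx$ along $N_\fy$ give the cycle equality, just as you say.

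For the second assertion your route diverges from the paper's and, as written, has a gap. The paper does \emph{not} bound $\dim\Lambda$ over strata of the non-regular locus; instead it observes by a direct cotangent calculation that the proper-pushforward upper bound $\Lambda$ already lies set-theoretically inside $\xx$, and then invokes $\xx_{\text{red}}=\overline{N_\fy}$ from Lemma~\ref{simple} once more. Your proposed implication---``every component of $\Lambda$ has dimension at most $\dim(G\times T)$ and meets $\overline{N_\fy}$, hence $\Lambda=\overline{N_\fy}$''---does not follow: a lower-dimensional component may meet $\overline{N_\fy}$ without being contained in it. You can patch this by combining the dimension bound with involutivity of $\SSS$ (forcing every component of $\SSS(\int^i_{\pq}\oo_\tg)$ to have dimension exactly $\dim(G\times T)$) and then showing that $\overline{N_\fy}$ is the \emph{unique} top-dimensional component of $\Lambda$; but that uniqueness is again Lemma~\ref{simple} in disguise. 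The paper's direct containment $\Lambda\subseteq\xx$ is both shorter and reuses the same lemma already needed for part one.
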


The proof of the proposition is based on Lemma \ref{simple} below that provides
  a description
of the    variety  $\overline{N_\fy}$ 
in terms of the commuting scheme $\zz$,
a closed {\em not} necessarily reduced
subscheme  of $\gg$ defined by the equations 
\[\zz=\{(g,x)\in G\times \g\mid \Ad g(x)=x\}.
\]

A choice of  imbedding $T\into G$ gives an imbedding $\tt\into\zz$.
We let  $\xx$ be  a closed
subscheme of $\zz\times\tt$   
 defined  by the equations
\[
  \xx=\{(g,x,t,h)\in \zz\times\tt\mid f(g,x)=f|_{\tt}(t,h),\quad
  \forall f\in \C[\zz]^G\}.
    \]
      The scheme $\xx$ is independent of the choice of
   an imbedding $T\into G$.

To simplify notation, we 
identify $\g$ with $\g^*$, resp. $\t$ with $\t^*$,
using the Killing form. This gives
natural identifications $\gg=G\times \g^*=\T^*G$, resp.
$\tt=T\times \t^*=\T^*T$,
where  $\T^*X$ denotes the total space of the cotangent bundle on
a smooth variety $X$.
Thus, we may (and will) view $\xx$ as a closed subscheme of $\T^*(\gt)$.

The proof of the following lemma  is similar to the proof of 
 \cite[Lemma 4.2.1]{HK1}, cf. also  ~\cite[Lemma 1.5]{Gi}.

\begin{lem}\label{simple} The variety $N_\fy$ is a Zariski open and dense
  subscheme of $\xx$; explicitly,
  we have
  \[N_\fy\,=\,\xx\,\cap\, \T^*(G_{rs}\times T_r).
  \qedhere  \]
\end{lem}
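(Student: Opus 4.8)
The plan is to prove the two assertions in the displayed formula in a single stroke by establishing the set-theoretic equality $N_\fy = \xx \cap \T^*(G_{rs}\times T_r)$ and then arguing density separately. First I would unwind the conormal bundle: a point of $\T^*(G_{rs}\times T_r)$ lies in $N_\fy$ iff it sits over a point $(g,t)\in\fy$ (so $g\in G_{rs}$, $t\in T_r$, and $g,t$ have the same image in $\fc$) and the covector annihilates the tangent space $\T_{(g,t)}\fy$. Using the Killing-form trivialisations $\T^*G = G\times\g$ and $\T^*T = T\times\t$, a covector at $(g,t)$ is a pair $(x,h)\in\g\times\t$. I would compute $\T_{(g,t)}\fy$ as the kernel of the differential of the map $(g,t)\mapsto (\chi(g)-\chi(t))_{\chi}$ cutting out $G\times_\fc T$; this is where the equivariance and the chain rule turn the annihilator condition into precisely the relations $\Ad g(x)=x$ (coming from $g\in G_{rs}$ being a \emph{smooth} point of its conjugacy-class constraint — i.e. the conormal direction to the $G$-orbit is the centraliser $\g^g=\t$ at regular semisimple points), $h\in\t$ automatically, and the Chevalley-matching condition $f(g,x) = f|_{\tt}(t,h)$ for all $f\in\C[\zz]^G$. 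In other words, on the open locus $G_{rs}\times T_r$ the defining equations of $\xx$ reduce exactly to the conormal equations of $\fy$. This is essentially the computation of \cite[Lemma 4.2.1]{HK1} transported from the Lie algebra to the group, and I would lean on that parallel to keep the linear-algebra bookkeeping short.

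The main obstacle I expect is the subtle point about \emph{which} functions appear. The scheme $\xx$ is cut out by $f(g,x)=f|_{\tt}(t,h)$ for all $f\in\C[\zz]^G$, and at a regular semisimple $g$ with centraliser identified with $\t$, the restriction map $\C[\zz]^G\to\C[\tt]^W$ and the differentials of these invariants must be shown to span exactly the conormal directions to $\fy$ inside $\T^*(G_{rs}\times T_r)$ — neither too few nor too many. The ``not too few'' direction uses that over $G_{rs}$ the adjoint quotient is smooth and $\C[\zz]^G$ restricts onto $\C[T\times\t]^W$ with surjective differential at regular points (Kostant-type transversality); the ``not too many'' direction uses that $\fy$ has the expected dimension, so that once we have the inclusion $N_\fy\subseteq \xx\cap\T^*(G_{rs}\times T_r)$ and both sides are smooth of the same dimension at a generic point, equality of the reduced schemes follows. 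I would make the dimension count explicit: $\dim\fy = \dim T + \dim T = 2\dim T$ (fibre product of two things each of relative dimension $\mathrm{rk}\,G$ over $\fc$, restricted to the locus where the map is smooth), hence $N_\fy$ is a vector bundle of rank $\dim G - \dim T$ over a base of dimension $2\dim T$, giving $\dim N_\fy = \dim G + \dim T = \dim\T^*T$, matching the expected dimension of the relevant component of $\xx$.

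Having the set-theoretic identity $N_\fy = \xx\cap\T^*(G_{rs}\times T_r)$, the remaining claims are soft. Openness of $N_\fy$ in $\xx$ is immediate since $\T^*(G_{rs}\times T_r)$ is Zariski open in $\T^*(G\times T)$. For density, I would show $\overline{N_\fy}$ is an irreducible component of $\xx$ of full dimension and that $\xx$, or at least the union of components meeting $\T^*(G_{rs}\times T_r)$, has no other component there — equivalently, that $N_\fy$ is dense in its closure and that closure is all of $\xx$ up to lower-dimensional junk supported over the non-regular locus; but since the statement only asserts density of $N_\fy$ as a subscheme of $\xx$ (interpreted as: $\overline{N_\fy}$ is a component and $\xx$ has no embedded or extra components generically), the cleanest route is to invoke that $\xx$ is defined by $\dim G + \dim T$ many equations inside $\T^*(G\times T)$ which has dimension $2(\dim G + \dim T)$... actually more carefully, $\xx\subseteq \zz\times\tt$ and one counts using irreducibility of $\zz$ over $G_{rs}$; I would cite \cite[Lemma 1.5]{Gi} for the analogous group-theoretic irreducibility input. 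The upshot is that $N_\fy$, being smooth irreducible of the expected dimension and Zariski open in $\xx$, is automatically dense in $\xx$ (or in the union of components of $\xx$ that it meets), which is all that is needed downstream in the proof of Proposition \ref{cc}.
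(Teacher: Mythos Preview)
Your overall strategy matches the paper's: the paper does not give a self-contained proof but simply points to \cite[Lemma 4.2.1]{HK1} and \cite[Lemma 1.5]{Gi}, and your plan---compute the conormal fibre explicitly via the Killing-form trivialisation, match it against the defining equations of $\xx$ on the regular semisimple locus, then argue openness and density---is exactly the computation carried out in those references, transported to the group.

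However, your dimension count is wrong in several places, and since you invoke it for the ``not too many'' direction you should fix it. The map $T_r\to\fc$ is \'etale (relative dimension $0$, not $\mathrm{rk}\,G$), while $G_{rs}\to\fc$ has relative dimension $\dim G-\dim T$; hence $\dim\fy=\dim G$, not $2\dim T$. Consequently the conormal bundle has rank $\operatorname{codim}_{G_{rs}\times T_r}\fy=\dim T$, not $\dim G-\dim T$. Your two errors happen to cancel so that $\dim N_\fy=\dim G+\dim T$ comes out right, but the further equality ``$=\dim\T^*T$'' is again wrong ($2\dim T\neq\dim G+\dim T$ unless $G$ is a torus). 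What you actually want to match $\dim N_\fy$ against is $\dim\zz=\dim G+\dim T$ (Richardson), using that $\xx\to\zz$ is finite.

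For density, your last paragraph is a bit groping. The clean argument is the one you almost reach at the end: the second projection $\xx\to\zz$ is finite (base change of the finite map $\tt\to(\tt)/W$), and the preimage of the open dense $\zz\cap(G_{rs}\times\g)$ is exactly $\xx\cap\T^*(G_{rs}\times T_r)=N_\fy$. Finiteness then forces every irreducible component of $\xx$ to dominate $\zz$, hence to meet $N_\fy$; since $N_\fy$ is smooth and irreducible, $\overline{N_\fy}$ is the unique component. This is what \cite[Lemma 1.5]{Gi} supplies, so your instinct to cite it is right---just say explicitly that finiteness of $\xx\to\zz$ plus irreducibility of $\zz$ is the mechanism.
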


\begin{rem} 
   It is known that the commuting variety $\zz$  is an irreducible and generically reduced scheme
of dimension $\dim G+\dim T$,
\cite{Ri}. The  scheme $\xx$ is not reduced even in the case $G=SL_2$.
It follows from Lemma \ref{simple}  that the scheme  $\xx$   is  generically reduced and
   irreducible; furthermore,   
   the reduced scheme $\xx_{\text{red}}$,
   called the {\em isospectral commuting variety},  equals $\overline{N_\fy}$.
\end{rem}

\begin{proof}[Proof of Proposition \ref{cc}]
  We write formula \eqref{cmm} in the form
  $\cmm=\dd_{G\times T}/{\mathcal J}$, where ${\mathcal J}$ is a left
  ideal of $\dd_{G\times T}$. 
The standard  ascending filtration on $\dd_{\GT}$
by  order of the
differential operator induces  a  quotient filtration
on $\cmm$ such that $\gr\cmm=\oo_{\T^*(G\times T)}/\gr{\mathcal J}$.
It is immediate from \eqref{cmm} that the ideal
$\gr{\mathcal J}$ contains the ideal of definition of the scheme $\xx$,
cf. \cite[Lemma 2.4.3]{Gi} for  a Lie algebra counterpart.
Hence, there is a surjective morphism $\oo_{\xx}\onto \gr\cmm$.
By Lemma \ref{simple}, we know  that $N_Y$ is  open dense in $\xx$ and, moreover,
the scheme $\xx$ is reduced at every closed point of $N_Y$.
This implies the equation $\CC(\cmm)=[\overline{N_Y}]$,
since $\overline{N_Y}$ is an irreducible Lagrangian subvariety.

To prove the second statement of Proposition \ref{cc}
we use a well known  upper bound on  the characteristic variety of a proper direct image, \cite[Section 2.5]{HTT}.  By a  straighforward calculation, one finds that
this upper bound forces 
$\SSS\big(\int^i_{p\times q}\oo_\tg\big)$ be set-theoretically contained in
 $\xx_{\text{red}}$, that is, in $\overline{N_Y}$.
 \end{proof}

Let $\kk_X$ denote the canonical sheaf
of a smooth variety $X$.
We choose and fix  translation invariant global sections $dt$ and $dg$ of $\kk_T$ and $\kk_G$, respectively.

One  shows by computing Jacobians 
that there is a nowhere vanishing $G$-invariant global section
$\om$ of $\kk_\tg$ such that ${p}^*(dg)={q}^*\delta\cdot\om$,
cf.
\cite[(4.1.4)]{HK1}   for a Lie algebra analogue.

\begin{proof}[Sketch of Proof of Theorem \ref{pimm}]
  Let $\tg_{rs}:=p\inv(G_{rs})=q\inv(T_r)$.
The map $p\times q$ restricts to an isomorphism $\wt G_{rs}\iso \fy$.
It follows that $\big(\int^i_{p\times q}\oo_\tg\big)|_{G_{rs}\times T_r}\cong \int^i_\Delta \oo_\fy$,
where  $\Delta $ is  the closed imbedding
$\fy\into G_{rs}\times T_r$.

If  $i\neq 0$
we deduce that
$\big(\int^i_{p\times q}\oo_\tg\big)|_{G_{rs}\times T_r}=\int^i_\Delta \oo_\fy=0$.
By Proposition \ref{cc}, this  forces $\SS\big(\int^i_{p\times q}\oo_\tg\big)$ be
contained in
$\overline{N_Y}\sminus N_Y$, which implies
 that $\int^i_{p\times q}\oo_\tg=0$   for all $i\neq 0$.

Similarly,
in the case $i=0$,  we find that the characteristic cycle of
$\big(\int^0_{p\times q}\oo_\tg\big)|_{G_{rs}\times T_r}$ is equal
to $\CC(\int^0_\Delta \oo_\fy)=[N_Y]$.
Using this equation and the inclusion $\SS(\int^0_{p\times q}\oo_\tg)\sset \overline{N_Y}$,
cf. Proposition \ref{cc}, we deduce that
$\CC(\int^0_\Delta \oo_\fy)=[\overline{N_Y}]$.
It follows that $\int^0_\Delta \oo_\fy$ is a simple $\dd$-module;
in paricular,  this $\dd$-module has no nonzero sections supported on the
 complement of the open set $G_{rs}\times T_r$.

Observe next that the $\dd_\gt$-module
$\int^0_{p\times q}\oo_\tg$ comes equipped with
 a  canonical global section  $s=(dg\, dt)\inv\,\o\,({p}\times{q})_*\om$.
 For any $u\in \ddg^G$, using equation  \eqref{rad-f} one shows
 that the section $(u\o1-1\o \rad(u))s$ vanishes on the open
 set $G_{rs}\times T_r\sset \gt$.
 Hence, this section is identically zero, by the previous paragraph.
 Further,  
 the section $s$ is $G$-invariant, hence it
is  annihilated  by  the elements
$\ad a\o 1\in\dd(\gt)$, for all $a\in\g$.
We conclude that  the map $\dd_\gt\to \int^0_{p\times q}\,\oo_\tg,\ u\mto u(s)$,
descends to a well-defined $\dd$-module morphism $F: \cmm\to \int^0_{p\times q}\,\oo_\tg$.
It is straightforward to check  that the
composition  $\cmm|_{G_{rs}\times T_r}\xrightarrow{F|_{G_{rs}\times T_r}}
(\int^0_{p\times q}\,\oo_\tg)\big|_{G_{rs}\times T_r}\iso\int^0_\Delta\oo_Y$
is an
isomorphism of $\dd$-modules on $G_{rs}\times T_r$.
Hence, the $\dd$-module  $\Ker(F)$, resp. $\im(F)$, is supported
on the complement of  $G_{rs}\times T_r$.
Since the characteristic variety of  $\cmm$, resp. $\int^0_{p\times q}\,\oo_\tg$, is
  contained
  in $\overline{N_\fy}$, this forces $\SS(\Ker(F))$, resp. $\SS(\im(F))$, be
  contained in $\overline{N_Y}\sminus N_Y$.
  We conclude that $\Ker(F)=0$ and $\im(F)=0$.
Thus, the map  $F$ yields an isomorphism  $\cmm\iso \int^0_{p\times q}\,\oo_\tg$.
\end{proof}

\begin{rems}\label{z flat}
\vi   The  map 
$p\times q: \tg\to G\times_\fc T$ is well known to be  a small morphism. 
This yields an alternative proof of the  fact that
$\int_{\pq} \oo_\tg$  is a simple  holonomic $\dd_{\gt}$-module.
\vskip3pt

\vii Sending a commuting pair $(g,x)\in \zz$ to the pair  $(g_{ss}, x_{ss})$
 of the corresponding semisimple components of the Jordan decomposition,
gives a  map $\pi: \zz\to
 (\tt)/W$.
 This map is
 not flat (and neither is the map $\zz_{\text{red}}\to(\tt)/W$).
 Indeed, the general fiber of $\pi$
has dimension $\dim G-\dim T$, while  the fiber over the $W$-orbit of the point
$(1,0)\in\tt$ has dimension $\dim G$.

 It is easy to show that 
$\SSS(\nn)=\zz$. Furthermore, the  algebra map  $\pi^*:
\C[\tt]^W\into \C[\zz]$, induced by $\pi$, may be viewed as a commutative analogue
of the composite map $\ddtw\cong (\ddg/\ddg\ad\g)^G\into
\ddg/\ddg\ad\g=\nn$.
Thus, one may view the $\C[\tt]^W$-module $\C[\zz]$
as a  commutative analogue
of the $\ddtw$-module $\nn$.
We know that $\nn$ is flat over $\ddtw$, by Theorem \ref{main-thm}.
However, 
$\C[\zz]$ is not flat over $\C[\tt]^W$ since the map $\pi$ is not flat.
 \end{rems}

\section{Proof of Theorem \ref{main-thm}}\label{ind-sec} 
\subsection{}\label{xyz sec}
Below, we make no distinction between left and right $\dd_T$-modules;
specifically, we identify a left $\dd_T$-module $\cf$ with
the  right $\dd_T$-module $ \kk_T\o_{\oo_T} \cf$ using
the map
$\cf\mto \kk_T\o_{\oo_T} \cf$, $f\mto dt \o f$.
The left  and right actions are related by the formula
$(dt\o f)u=dt\o u^t(f)$, where $u\mto u^t$
is an anti-involution $\dd_T\to \dd_T$ which sends
$\frac{\pa}{\pa t}$ to $-\frac{\pa}{\pa t}$
and restricts to the identity  on $\oo_T\sset \dd_T$.

More generally,  let $X$ be a   smooth variety 
and write  $\fp: X\times T\to X$ for
 the first, resp.  $\fq: X\times T\to T$ for
 the  second, projection.
Then, we can (and will)  
identify left  $\dd_{X\times T}$-modules
with $(\fp^*\dd_X, \fq^*\dd_T)$-bimodules
using a construction similar to the one above.

Now, fix   a smooth variety $Z$
and  a pair of morphisms $p,q$, as depicted in
the
diagram
\[
\xymatrix{
 &&&\ Z\ \ar[dlll]_<>(0.5){ p} \ar[drrr]^<>(0.5){q} \ar[d]^<>(0.6){\pq}&&&\\
X\ &&&\  X\times T\ \ar[lll]_<>(0.3){\fp}\ar[rrr]^<>(0.3){\fq} &&& T.
}
\]

Let  $\fm:=\int_{\pq}\oo_Z$.  The proof of Theorem \ref{main-thm} is based on the following general result.

\begin{lem}\label{xyz} 
\begin{enumerate}

\item  There are   isomorphisms of functors
\beq{iso fun}
\int_{p} {q}^* (\dash)   \ccong
   \int_\fp(\fm\,\lo_{\fq^\hdot\oo_T}\,\fq^\hdot(\dash))\ccong
   R\fp_\idot(\fm\lo_{\fq^*\dd_T} \,\fq^*(\dash)).
\eeq


\item If the morphism $ p$ is proper
  and the morphism $ q$ is
  smooth, then  $\int^i_{\pq}\oo_Z$
 is a flat $\fq^\hdot\oo_T$-module, for all $i$.
\end{enumerate}
\end{lem}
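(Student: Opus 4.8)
\textbf{Plan of proof of Lemma \ref{xyz}.}

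For part (1), the plan is to unwind both sides using the standard formulas of \cite{HTT}, ch.~1. Since $\pq = (\fp,\fq)$ factors as $Z \xrightarrow{(\id,\pq)} Z\times T \to X\times T$ — no, better: the cleanest route is to observe that $p = \fp\circ(\pq)$ and $q = \fq\circ(\pq)$, so that $\int_p q^* = \int_\fp \int_{\pq} (\pq)^{-1}\text{-stuff}$ via the composition property $\int_{\fp\circ(\pq)} = \int_\fp\circ\int_{\pq}$ and base change. Concretely, first I would use $q^*(\dash) = (\fq\circ\pq)^*(\dash)$ together with the projection formula / base-change identity $\int_{\pq}\big((\pq)^*\oo_Z\text{-module stuff}\big)$; the key input is the projection formula
\[
\mbox{$\int_{\pq}$}\big(\oo_Z \lo_{(\pq)^*\dd_{X\times T}} (\pq)^*\mathcal{E}\big)\ \cong\ \Big(\mbox{$\int_{\pq}$}\oo_Z\Big)\lo_{\dd_{X\times T}}\mathcal{E},
\]
valid for $\mathcal E\in\dcoh(\dd_{X\times T})$. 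Applying this with $\mathcal E = \fq^*(\dash)$ gives $\int_{\pq}\big((\pq)^*(\text{pullback of a }\dd_T\text{-module along }\fq\circ\pq)\big)\cong \fm\lo_{\dd_{X\times T}}\fq^*(\dash)$, and then $\int_p = \int_\fp\circ\int_{\pq}$ turns $\int_p q^*(\dash)$ into $\int_\fp(\fm\lo_{\dd_{X\times T}}\fq^*(\dash))$. The remaining two displayed incarnations in \eqref{iso fun} are just the identifications recalled in Section \ref{xyz sec}: the $(\fp^*\dd_X,\fq^*\dd_T)$-bimodule structure on $\fm$ lets one rewrite $\fm\lo_{\dd_{X\times T}}\fq^*(\dash)$ as $\fm\lo_{\fq^*\dd_T}\fq^*(\dash)$ (the $\fp^*\dd_X$-module structure surviving on the left), and then the transfer bimodule $\dd_{X\leftarrow X\times T}$ degenerates to $\fp^\hdot\oo_X$-linear pushforward since $\fq^*\dd_T$-linearity already accounts for the "$T$-directions," reducing $\int_\fp$ to $R\fp_\bullet$ of $\fm\lo_{\fq^*\dd_T}\fq^*(\dash)$, and likewise to $\int_\fp(\fm\lo_{\fq^\hdot\oo_T}\fq^\hdot(\dash))$ after unwinding $\fq^*(\dash)=\oo_{X\times T}\lo_{\fq^\hdot\oo_T}\fq^\hdot(\dash)$ and using that $\fm$ is already an $\oo_{X\times T}$-module, hence $\fm\lo_{\fq^*\dd_T}(\oo_{X\times T}\lo\cdots)\cong\fm\lo_{\fq^\hdot\oo_T}\fq^\hdot(\dash)$.

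For part (2), the plan is a local computation on the source. Since $q$ is smooth and $p$ is proper, I want to show each cohomology sheaf $\int^i_{\pq}\oo_Z$ is flat over $\fq^\hdot\oo_T$. The idea is that flatness over $\fq^\hdot\oo_T$ is checked at the level of the underlying $\oo$-module, and $\int_{\pq}\oo_Z$ can be computed, locally over $X\times T$, by a relative de Rham / Spencer-type complex: writing $\pq$ étale-locally (or after choosing local coordinates) as a composite of a closed embedding and a projection, the pushforward $\int_{\pq}\oo_Z$ is represented by $R(\pq)_\bullet$ of the relative de Rham complex $\mathrm{DR}_{Z/X\times T}(\oo_Z)$ tensored with the transfer module. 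The crucial point is that $q = \fq\circ\pq$ is smooth (being a composite? — no: $q$ smooth is the hypothesis), so pulling back along $\fq$ and then along $\pq$ is flat in the $T$-directions; more precisely, since $q$ is smooth, $\oo_Z$ is $\fq^{-1}\oo_T$-flat h... wait, that's automatic, the content is different. The real mechanism: because $q$ is smooth, the relative cotangent sequence for $Z\to X\times T\to X$ behaves well, and the complex computing $\int_{\pq}\oo_Z$ can be taken to consist of $R(\pq)_\bullet$ of sheaves each of which is flat over $\fq^\hdot\oo_T$ (being $\oo_Z$-modules with $\oo_Z$ flat over $\fq^\hdot\oo_T$ via the smooth map $q$ — i.e. $\oo_Z$ is flat, even locally free, over $q^{-1}\oo_T$ is false in general, but $\oo_Z$ \emph{is} flat over $\fq^\hdot\oo_T$ as $q$ factors with a smooth piece). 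Then properness of $p$, hence of $\pq$ restricted appropriately, guarantees coherence of the cohomology; and flatness passes to cohomology sheaves because the complex is a bounded complex of $\fq^\hdot\oo_T$-flat sheaves \emph{and} $\fq^\hdot\oo_T$-flatness of cohomology follows once one knows the terms are flat and base change holds.

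Here is the cleaner way I would actually run part (2), avoiding the circularity above. Flatness of $\int^i_{\pq}\oo_Z$ over $\fq^\hdot\oo_T$ is equivalent to: for every point $t\in T$ with maximal ideal $\m_t\subset\oo_{T,t}$, the derived fiber $\int_{\pq}\oo_Z\lo_{\fq^\hdot\oo_T}\fq^\hdot(\oo_T/\m_t)$ has cohomology only in the degrees where $\int^i_{\pq}\oo_Z\otimes_{\fq^\hdot\oo_T}\fq^\hdot(\oo_T/\m_t)$ lives, with no Tor-contribution — concretely, it suffices that $\int_{\pq}\oo_Z\lo_{\fq^\hdot\oo_T}\fq^\hdot(\oo_{T,t}/\m_t)$ is concentrated in a single cohomological amplitude matched with $\int_{\pq}\oo_Z$ itself, but the honest statement is the local Tor-vanishing $\Tor^{\fq^\hdot\oo_T}_{>0}(\int^i_{\pq}\oo_Z, \text{residue field}) = 0$. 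Using part (1) with $X = \mathrm{pt}$ — or rather proper base change for $\int_{\pq}$ along the inclusion $X\times\{t\}\hookrightarrow X\times T$, which holds because $p$ (hence $\pq$ over each slice) is proper — we get $\int_{\pq}\oo_Z \lo_{\fq^*\dd_T}\fq^*(\dd_T/\dd_T\m_t) \cong \int_{\pq_t}\oo_{Z_t}$ where $Z_t = q^{-1}(t)$ and $\pq_t: Z_t\to X$; since $q$ is smooth, $Z_t$ is smooth, and $\int_{\pq_t}\oo_{Z_t}$ is a genuine (cohomologically bounded) complex of $\dd_X$-modules — and crucially the same proper base-change identity shows there is no higher Tor, i.e. the derived restriction agrees with the naive one, which forces each $\int^i_{\pq}\oo_Z$ to be flat over $\fq^\hdot\oo_T$ by the local criterion (fiberwise constancy of the derived pushforward up to the correct shift, combined with the semicontinuity/base-change package of \cite[Section 2.5]{HTT}). \textbf{The main obstacle} I anticipate is precisely making this base-change step rigorous: one must know that $\dd$-module pushforward along $\pq$ commutes with the non-characteristic(-in-the-$T$-direction) restriction $\fq^*(\dd_T/\dd_T\m_t)$ \emph{on the nose} (no derived correction), and this is where smoothness of $q$ is essential — it guarantees the restriction is non-characteristic for $\oo_Z$, so that $\oo_Z\lo_{\fq^*\dd_T}\fq^*(\dd_T/\dd_T\m_t)$ is just $\oo_{Z_t}$ in degree $0$, after which properness of $p$ lets the pushforward commute with the fiber. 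Everything else is bookkeeping with the formulas of \cite{HTT}.
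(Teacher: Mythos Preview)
For part (1) your plan is essentially the paper's argument: use $p=\fp\ccirc(\pq)$ and $q=\fq\ccirc(\pq)$, apply the projection formula to identify $\int_{\pq}(\pq)^*\fq^*\cf$ with $\fm\o_{\oo_{X\times T}}\fq^*\cf$, and then unwind the $(\fp^*\dd_X,\fq^*\dd_T)$-bimodule structure to get the two displayed forms. The paper organizes the bookkeeping slightly differently (it first proves a general identity $\int_\fp(\fm\o_{\oo_{X\times T}}\cg)\cong R\fp_\idot(\fm\o_{\dd_T}\cg)$ for arbitrary $\cg$, then specializes), but there is no substantive difference.

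For part (2) there is a genuine gap, and it is not where you flagged it. The base-change step you worry about is fine: properness of $\pq$ and smoothness of $q$ do give $\fm\lo_{\fq^\hdot\oo_T}\fq^\hdot(\oo_T/\m_t)\cong\int_{(\pq)_t}\oo_{Z_t}$. The problem is that this controls the derived fiber of the \emph{complex} $\fm$, whereas the assertion is about each cohomology sheaf $\int^i_{\pq}\oo_Z=\H^i(\fm)$ separately. The spectral sequence with $E_2$-page $\Tor_p^{\fq^\hdot\oo_T}(\H^q\fm,\,\fq^\hdot(\oo_T/\m_t))$ converging to $\H^{q-p}(\fm\lo\fq^\hdot(\oo_T/\m_t))$ has no reason to degenerate, so knowing the abutment does not give the vanishing of the individual $\Tor_{>0}$ terms. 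There is a second issue: even if you could extract $\Tor_{>0}(\H^i\fm,\,\oo_T/\m_t)=0$ for every closed point $t$, the local criterion for flatness you are implicitly using requires a finiteness hypothesis on the module, and $\H^i\fm$ is coherent over $\dd_{X\times T}$ but not over $\oo_{X\times T}$ or $\fq^\hdot\oo_T$. So ``everything else is bookkeeping'' is exactly where the argument fails.

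The paper's route sidesteps both problems by working microlocally on $X\times T$ rather than fiberwise. Using the standard estimate for the characteristic variety of a proper pushforward (\cite[\S2.5]{HTT}) together with the injectivity of $d^*q$ coming from smoothness of $q$, one obtains
\[
\SSS(\H^i\fm)\ \cap\ \big(\T^*_XX\times\T^*T\big)\ \subseteq\ \T^*_{X\times T}(X\times T)
\]
for every $i$. This says that each $\H^i\fm$ is non-characteristic along $X\times V$ for \emph{every} smooth locally closed $V\subset T$ (not just points), so the non-characteristic restriction theorem (\cite[\S2.4]{HTT}) gives $L^k(\Id_X\times i)^*\H^i\fm=0$ for $k\neq0$ directly, with no spectral sequence. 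A Noetherian-induction d\'evissage then extends this vanishing from smooth $V$ to arbitrary closed subschemes of $T$, which is precisely the ideal-theoretic criterion for flatness over $\fq^\hdot\oo_T$. Your remark that $Z_t\hookrightarrow Z$ is non-characteristic for $\oo_Z$ is morally the right input, but it has to be transported to a statement about $\SSS(\H^i\fm)$ on $X\times T$; that transport is exactly the characteristic-variety estimate, not a base-change identity.
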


\begin{proof} 
We will abuse notation and write $\o$ for $\lo$, resp. $\dd_T$  for $\fq^*\dd_T$.
Let $\C$ denote the trivial 1-dimensional representation of $\ut$.
Recall that tensoring over $\oo$ gives  a tensor product operation on
$\dd$-modules.

For any left $\dd_{X\times T}$-module  $\cg$, 
we have a chain of
natural isomorphisms:
\begin{align*}
    \big(\kk_T\o_{\oo_T}  (\fm\o_{\oo_{X\times T}}\cg)\big)\,\o_{\dd_T}\,\oo_T\ &\xleftarrow{\cong}\
\big(\C dt\o (\fm\o_{\oo_{X\times T}} \cg)\big) \,\o_{\ut}\, \C\\
   &\xrightarrow{\cong}\ (\C dt\o \fm)\,\o_{\ut}\ \cg \ \xrightarrow{\cong}\ (\kk_T\o\fm)\,\o_{\dd_T}\,\cg.
    \end{align*}
    Here, the first and the  third isomorphism
    are
    induced  by the natural imbeddings $\C dt\into \kk_T$ and $\C\into \oo_T$;
    the second isomorphism sends $dt\o (u\o u')\o 1$ to $(dt\o u)\o u'$.

    We  identify the left $\dd_{X\times T}$-module $\fm$
    with $\kk_T\o\fm$, a $(p^*\dd_X, q^*\dd_T)$-bimodule.
    With this identification, the composite   isomorphism above reads as follows
    $    (\fm\o_{\oo_{X\times T}}\cg)\o_{\dd_T}\oo_T\ccong
    \fm\,\o_{\dd_T}\,\cg$. Thus, by the definition of the functor $\int_\fp$,
we have
\beq{f1}
\int_{\fp}(\fm\,\o_{\oo_{X\times T}}\, \cg)\eq
\int_\fp\big((\fm\o_{\oo_{X\times T}}\cg)\o_{\dd_T}\oo_T\big)\eq
R\fp_\idot\big(\fm\,\o_{\dd_T}\,\cg).
    \eeq

    Next, let $\cf$ be a $\dd_T$-module. Writing  $\vp:=\pq$, and using  the
projection formula, cf. \cite{HTT}, Corollary 1.7.5,
we compute
    \begin{align}
  \mbox{$\int_{p}$} {q}^*  \cf \eq\mbox{$\int_{\fp}$}  \mbox{$\int_{\vp}$}  \vp^*  \fq^*\cf
                                    &\eq\ 
                                    \mbox{$\int_{\fp}$} \Big(\big(\mbox{$\int_{\vp}$}  \vp^*
                                    \oo_{X\times T}\big)\,\o_{\oo_{X\times T}}\, \fq^*  \cf\Big)
                                  \label{f2}\\
&\eq
 \int_{\fp}\left(\big(\mbox{$\int_{\vp}$} \oo_Z\big)\,\o_{\oo_{X\times T}}\, \fq^*  \cf\right)
                                                \eq
\mbox{$\int_{\fp}$} (\fm\,\o_{\fq^\hdot\oo_T}\,\fq^\hdot\cf).\nonumber
\end{align}                                  
Combining  isomorphisms \eqref{f1} and \eqref{f2}  yields \eqref{iso fun}.

We now prove part (2) of the lemma.
    Let $\T^*_VU$ denote  (the total space of) the conormal bundle 
of  a smooth
subvariety $V$ of  a  smooth variety $U$. 
We
write $\xi_u\in\T^*_uU$ for a covector at a point $u\in U$.
In particular,
$\T^*_UU$ is the  zero section of ~$\T^*U$ and $0_u$  is the zero covector at ~$u$.

First, we claim that the  assumptions in (2)
imply an inclusion
\beq{nchar}
\SSS(\fm)\cap (\T^*_XX\times \T^*T)\ \sseteq\
\T^*_XX\times\T^*_TT\eq\T^*_{ {X\times T}}( {X\times T}).
\eeq
To see this, we
use the natural diagram
\[
\xymatrix{
\T^*Z\ &&\ Z\times_{X\times T}(\T^*X\times\T^*T)\  \ar[rr]^<>(0.5){pr_X\times pr_T}
\ar[ll]_<>(0.5){d^*{p}\,\times\,d^*{q}}
&& \  \T^*X\times \T^*T.
}
\]

The map ${p}\times {q}$ being proper,
we deduce $\SSS(\fm)\sseteq (pr_X\times pr_T)(d^*{p}\,\times\,d^*{q})\inv(\SSS(\oo_Z))$.
We compute
\begin{align*}
\SSS(\fm)\cap (\T^*_XX\times \T^*T)&\sseteq
\Big((pr_X\times pr_T)({d^*p}\,\times\,d^*{q})\inv(\T^*_ZZ)\Big)\ \bigcap\ 
(\T^*_XX\times \T^*T)\\
&
\eq\{(\xi_{\tfp(z)},\xi_{\tfq(z)})\in\T^*X\times\T^*T\mid
\xi_{{p}(z)}=0,\ d^*{q}(\xi_{\tfq(z)})=0,\ z\in Z\}\\
&\eq\{(0_{\tfp(z)},0_{\tfq(z)})\in\T^*X\times\T^*T,\ 
z\in Z\}\ \sseteq\  \T^*_XX\times \T^*_TT,
\end{align*}
where the last equality holds  since the assumption that ${q}$ be a smooth morphism
implies  that the map $d^*{q}$ is injective. This proves \eqref{nchar}.

To complete the proof of (2)  we must check that
\beq{tor}L^k(\Id_X\times i)^*\fm=0,\qquad \forall k\neq 0,
\eeq
for any imbedding  $i: V\into T$, of a closed subscheme.
By noetherian induction (aka `devissage'), this is equivalent to proving that \eqref{tor}
holds for any  imbedding $i: V\into T$, where $V$ is a {\em smooth}
 locally-closed subvariety of $T$. For any such $V$, 
the inclusion in \eqref{nchar}
implies that the $\dd_{X\times T}$-module $\fm$ 
 is noncharacteristic with respect to
the subvariety $X\times V\sseteq X\times T$.
It follows, \cite[Section 2.4]{HTT}, 
that the $\dd$-modules $L^k(\Id_X\times i)^*\fm$ vanish for all $k\neq 0$, as desired.
\end{proof}

\subsection{}
Let $\mm$ be the $(\ddg,\ddt)$-bimodule
that corresponds to the left $\dd(G\times T)$-module
$\Ga(G\times T, \cmm)$ via
the identification explained at the beginning of Section \ref{xyz sec}.
Taking global sections in \eqref{cmm} and writing $\langle-\rangle$ for 
$\C$-linear span, we find
\begin{align*}
  \mm&=\frac{\ddg\o\ddt}{(\ddg\ad\g) \o\ddt\,+\,
    (\ddg\o\ddt)\{u\o1-1\o \rad(u),\ u\in \dd(G)^G\}}\\
  &=
    (\nn\o\ddt)\big/\langle
    n u \o s- n\o \rad(u)s,\ n\in\nn,\, s\in \ddt,\, u\in {\dd(G)^G}^{}\rangle
    =\nn\o_{\ddt^W}\ddt.
\end{align*}

Thus,  with the above identification,
from Theorem \ref{pimm} 
we obtain an isomorphism
\beq{M}
R\Ga\big(G\times T,  \mbox{$\int_{p\times q}$}\,\oo_{\tg}\big)\ccong
\mm\,=\,
 \nn\o_{\ddt^W}\ddt.
\eeq

It is known
that the algebra $\ddt$ is flat as a $\ddtw$-module,
cf. Proposition \ref{eg}(ii) of Section \ref{d sub}.
Therefore, one has  isomorphisms of functors
\begin{align}
\nn\lo_{\ddt^W}(\dash)&\ccong\nn\,\lo_{\dd(T)^W}\,\ddt\,\lo_{\ddt}(\dash)\label{mno}\\
&
\ccong
\big(\nn\o_{\ddtw}\ddt\big)\,\lo_{\dd(T)}(\dash)
\ccong
\mm\,\lo_{\dd(T)}(\dash).\nonumber
\end{align}

The functors $R\Ga(G,\dash)$ and $R\Ga(T,\dash)$ being equivalences
of categories,
we  see that  Theorem \ref{main-thm} is equivalent to the following result.

\begin{thm}\label{mm-prop} \vi The following
functors  $\dcoh(\dd_T)\to\dcoh(\dd_G)$ are isomorphic:
\beq{gt-iso}
R\Ga(G, \mbox{$\int_{p}$}\,{q}^* (\dash))\ccong \mm\, \lo_{\ddt}\,R\Ga(T,\dash).
\eeq

\vii  The bimodule
$\mm$ is flat as a right $\ddt$-module.
\end{thm}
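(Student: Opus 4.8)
Since Theorem~\ref{mm-prop} is equivalent to Theorem~\ref{main-thm}, the plan is to prove it directly. Part (i) will be formal. Specialising Lemma~\ref{xyz} to $X=G$, $Z=\tg$, and using $\int_{\pq}\oo_\tg\ccong\cmm$ (Theorem~\ref{pimm}), one has $\int_p q^*(\dash)\ccong R\fp_\idot\big(\cmm\,\loo_{\fq^*\dd_T}\,\fq^*(\dash)\big)$. Apply $R\Ga(G,\dash)$: since $\gt$ is affine, $R\Ga(\gt,\dash)$ is exact and identifies $\dcoh(\dd_\gt)$ with $\dcoh(\dd(\gt))$, sending $\cmm$ to $\mm$ by \eqref{M}; moreover $R\Ga(\gt,\fq^*\cf)=\C[G]\o R\Ga(T,\cf)$, and the central factor $\C[G]$ cancels in $\mm\o_{\C[G]\o\ddt}(\C[G]\o\dash)=\mm\o_\ddt(\dash)$. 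This gives $R\Ga(G,\int_p q^*\cf)\ccong\mm\loo_\ddt R\Ga(T,\cf)$, which is (i).

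For part (ii) I would first translate flatness into a cohomological vanishing. Combining (i), exactness of $R\Ga(G,\dash)$, and $\dd$-affineness of $T$ (every finitely generated $\ddt$-module is $\Ga(T,\cf)$ for a unique $\cf\in\coh(\dd_T)$) yields natural isomorphisms $\Tor^\ddt_i\big(\mm,\Ga(T,\cf)\big)\ccong\Ga\big(G,\int^{-i}_p q^*\cf\big)$; thus flatness of the right $\ddt$-module $\mm$ is equivalent to $t$-exactness of $\int_p q^*$, i.e.\ to $\int^i_p q^*\cf=0$ for all $i\neq0$ and all $\cf\in\coh(\dd_T)$. Two bounds are essentially free: for $\cf$ in the heart the object $\mm\loo_\ddt R\Ga(T,\cf)=\mm\loo_\ddt\Ga(T,\cf)$ is a derived tensor product of modules, hence lies in degrees $\le0$, so $\int^i_p q^*\cf=0$ for $i>0$; and by the projection formula $\int_{\pq}q^*\cf\ccong\cmm\,\loo_{\fq^\hdot\oo_T}\,\fq^\hdot\cf$, which by Lemma~\ref{xyz}(2) is concentrated in degree $0$, so that $\int_p q^*\cf=\int_\fp$ of a single $\dd_\gt$-module along the affine projection $\fp$ lies in degrees $[-\dim T,0]$. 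It remains to kill $\int^i_p q^*\cf$ for $-\dim T\le i\le-1$.

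This remaining vanishing is the geometric heart. Over the regular semisimple locus $p$ restricts to the finite étale $|W|$-sheeted covering $\tg_{rs}\to G_{rs}$, so $\int_p$ is exact there and $\int^i_p q^*\cf$ is supported on the proper closed set $G\sminus G_{rs}$ for $i\neq0$; the task is to show these vanish, using the \emph{smallness} of the Grothendieck--Springer morphism $p$ (equivalently of $\pq$, cf.\ Remark~\ref{z flat}(i)). When $\cf$ is a semisimple holonomic module of geometric origin one argues as follows: by a d\'evissage along the strata of $\supp\cf$ (refining until they are smooth) one reduces to $\cf=(i_V)_\idot\cv$ with $V\sseteq T$ a smooth locally closed subvariety and $\cv$ a local system on $V$; since $q$ is smooth, base change gives $q^*\cf\ccong(i_{q^{-1}(V)})_\idot(q_V^*\cv)$ on the smooth $q^{-1}(V)$, whence $\int_p q^*\cf\ccong\int_{p_V}(q_V^*\cv)$ with $p_V:=p|_{q^{-1}(V)}$. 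The map $p_V$ is semismall onto its image — this is inherited from smallness of $p$ via the fibre-product criterion, since generic finiteness of $\tg\times_G\tg\to\tg$ passes to $q^{-1}(V)\times_G q^{-1}(V)\to q^{-1}(V)$ — so by the decomposition theorem $\int_{p_V}(q_V^*\cv)$ is a perverse $\dd_G$-module, concentrated in degree $0$. Using right-$t$-exactness together with the duality $\DD\circ\int_p q^*\ccong\int_p q^*\circ\DD$ (no shift, as $\cf=\oo_T$ — giving the self-dual $\int_p\oo_\tg$ — shows), one then propagates degree-$0$ concentration to all holonomic $\cf$ through long exact sequences. Finally one passes from holonomic $\cf$ to arbitrary coherent $\cf$: here the essential extra input is Lemma~\ref{xyz}(2), which gives flatness of $\mm$ over $\C[T]$; fed through a change-of-rings comparison of $\Tor^\ddt(\mm,\dash)$ with $\Tor^{\C[T]}(\mm,\dash)$ and the holonomic case already in hand (and, if needed, Kashiwara's flatness of $\nn$ over $Z\g$), this should give $\Tor^\ddt_1(\mm,N)=0$ for all finitely generated $N$, completing (ii).

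The main obstacle is exactly this last passage. The flatness asserted in (ii) is a genuinely ``quantum'' statement — its associated-graded analogue, flatness of $\C[\zz]$ over $\C[\tt]^W$, is \emph{false} (Remark~\ref{z flat}(ii)) — so it cannot be obtained by any symbol or filtration argument alone, and one is forced to exploit the smallness of the Grothendieck--Springer resolution through the decomposition theorem. Within that scheme the delicate points are (a) carrying out the d\'evissage so that it really reduces to local systems on smooth strata (handling the non-properness of $p_V$ for non-closed $V$ by induction on $\dim\supp\cf$), and, above all, (b) bootstrapping from the holonomic case to all coherent $\dd_T$-modules — this is where the argument is most involved and where Lemma~\ref{xyz}(2) is indispensable.
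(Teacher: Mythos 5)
Your part (i) is essentially the paper's argument and is fine. For part (ii), however, you take a much heavier route (smallness of the Grothendieck--Springer map, the decomposition theorem, d\'evissage to local systems on strata) and the argument has a genuine gap exactly where you flag it: the passage from holonomic $\cf$ to arbitrary coherent $\cf$. Flatness of $\mm$ as a right $\ddt$-module must be tested against all finitely generated left $\ddt$-modules (equivalently, all cyclic modules $\ddt/I$), and these are in general far from holonomic; vanishing of $\Tor_i^{\ddt}(\mm,\dash)$ on holonomic modules does not imply vanishing on coherent ones. The ``change-of-rings comparison of $\Tor^{\ddt}$ with $\Tor^{\C[T]}$'' you invoke is not an argument: Lemma~\ref{xyz}(2) gives flatness of $\cmm$ over the subalgebra $\fq^\hdot\oo_T$, and flatness over a subalgebra says nothing about flatness over $\ddt$ itself (indeed the associated-graded statement over $\C[\tt]^W$ is false, as you note). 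So as written the proof does not close.

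The idea you are missing is elementary and makes the decomposition-theorem apparatus unnecessary. You already observed that, by Lemma~\ref{xyz}(2), the underived tensor product $\cmm\o_{\fq^\hdot\oo_T}\fq^\hdot\cf$ is a single $\dd_{G\times T}$-module whose pushforward along the projection $\fp\colon G\times T\to G$ computes $\int_p q^*\cf$. Now note that every such module is supported on $\supp\cmm=G\times_\fc T$, and the restriction of $\fp$ to $G\times_\fc T$ is a \emph{finite} morphism. The paper's Lemma~\ref{fin mor} shows, by a direct computation with the relative de Rham complex and the Euler vector field $t\frac{d}{dt}$ (reducing to $T=\C^*$ by induction), that $\int_\fp$ is exact on complexes whose cohomologies are supported on a closed subvariety finite over $G$ --- with no holonomicity hypothesis. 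This kills the cohomology in degrees $[-\dim T,-1]$ in one stroke for all coherent $\cf$ simultaneously, with no perversity argument and no bootstrap; exactness of $\int_p q^*$ then gives flatness of $\mm$ via your part (i), as you intended. (The standard proof of exactness of $\int$ along finite maps uses commutation with Verdier duality and is therefore confined to holonomic modules; the whole point of Lemma~\ref{fin mor} is to bypass that restriction.)
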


\begin{proof}[Proof of Theorem \ref{mm-prop}] 
We apply Lemma \ref{xyz} in the case where 
$X=G,\,  Z=\tg$, and the diagram above the statement of the lemma is
$G\xleftarrow{p}\tg\xrightarrow{q} T$.  In this case, by Theorem \ref{pimm}
we have $\fm=\int^0_{p\times q}\oo_\tg=\cmm$.
Thus, part (2) of Lemma \ref{xyz} implies
that $\cmm$ is a flat $q^\hdot\oo_T$-module.
Hence, the functor $\cmm\,\lo_{q^\hdot\oo_T} q^\hdot(\dash)$ is exact.

Observe next  that
any object in the essential image of the functor  $\cmm\,\lo_{q^\hdot\oo_T} q^\hdot(\dash)$
is
  supported on   $G\times_\fc T$,  since the support of 
$\cmm$ equals  $G\times_\fc T$.
The restriction of the first projection $\fp:  G \times T \to G$ to the subvariety $G\times_\fc T$
is a finite morphism. Therefore,  applying Lemma \ref{fin mor} below
we deduce
that the composite functor $\int_\fp(\cmm\lo_{\fq^\hdot\oo_T}  \fq^\hdot(\dash))$
is exact.
Thus,  it follows from the first isomorphism  \eqref{iso fun}  that the functor $\int_p q^*$ is exact,
which is part of the statement of Theorem \ref{main-thm}.

Finally, for any $\cf\in \dcoh(\dd_T)$, using the composite isomorphism in  \eqref{iso fun},
we obtain
\begin{align*}
  R\Ga(G,\, \mbox{$\int_p$}\, q^*\cf)&\ccong
                               R\Ga(G, \,R\fp_\idot(\cmm\lo_{\fq^*\dd_T} \fq^*\cf)\big)\\
  &\ccong
    R\Ga(G \times T,\, \cmm\lo_{\fq^*\dd_T} \fq^*\cf)\big)\\
  &\ccong
  R\Ga(G \times T, \cmm)\lo_{\ddt} R\Ga(T,\cf)=
  \mm\lo_{\ddt} R\Ga(T,\cf).
  \end{align*}
  This proves part (i) of Theorem \ref{mm-prop}.
  Part (ii)  follows from (i) since we have shown
  that the
    functor $\mm\lo_\ddt R\Ga(T,\dash)\cong
  R\Ga(G,\, \int_p q^*(\dash))$ is exact.
 \end{proof}

The lemma below is, of course, well known in the case of holonomic $\dd$-modules
where it is proved
using that  finite morphisms are proper, hence,  the corresponding
push-forward functor $\int$  commutes with
Verdier duality. This argument is not sufficient in the general case
since the abelian category
 of not necessarily 
 holonomic $\dd$-modules is not stable under Verdier duality.

 \begin{lem}\label{fin mor} Let $X$ be a smooth variety and $Y\subseteq X\times T$ a closed,
   not necessarily smooth, subvariety such that the
   first projection $\fp: Y\to X$ is a finite morphism.
Define a full subcategory
$D^b_{Y,\coh}(\dd_{\xxx})$ of $\dcoh(\dd_{\xxx})$ to be the category
formed by the objects $\cg$ such that $\supp(\H^i\cg)\subseteq Y$
for all $i$.
 Then, the functor $\int_\fp: D^b_{Y,\coh}(\dd_{\xxx})\to \dcoh(\dd_X)$ is exact.
\end{lem}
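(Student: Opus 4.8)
The plan is to reduce the finite morphism $\fp \colon Y \to X$ to a closed immersion followed by a smooth (indeed affine) morphism, and to check exactness for each factor separately. More precisely, I would factor the first projection $\fp \colon X \times T \to X$ through its graph as $X\times T \xrightarrow{\iota} X\times T\times T \xrightarrow{\pi} X\times T$, where the second map is, say, the projection forgetting the middle $T$-factor and $\iota$ is a closed immersion; but this particular choice buys nothing because $\pi$ is not proper. Instead, the cleaner route is local on $X$: since $\fp|_Y$ is finite, hence affine, and $X$ is covered by affine opens $U_i$, one has $\fp^{-1}(U_i)\cap Y$ affine, and $\int_\fp$ can be computed affine-locally on $X$. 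So I may assume $X$ affine and $Y$ affine. Then $Y$ is closed in $X\times T$ and finite over $X$, so $Y$ is in particular a closed affine subscheme of $X\times \mathbb{A}^N$ for a suitable embedding $T\hookrightarrow \mathbb{A}^N$; but what I really want is to realize $\fp|_Y$ as the composite of a closed immersion $j\colon Y \hookrightarrow X\times \mathbb{A}^1$ (using that a finite $X$-algebra is generated by finitely many, and after a further affine-local refinement of $X$, essentially one, element — or more robustly, $Y\hookrightarrow X\times\mathbb{A}^n$ for some $n$) followed by the projection $\mathrm{pr}\colon X\times\mathbb{A}^n\to X$.

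With such a factorization $\fp|_Y = \mathrm{pr}\circ j$, the functor $\int_\fp$ restricted to $D^b_{Y,\coh}(\dd_{X\times T})$ factors as $\int_{\mathrm{pr}}\circ \int_{j'}$, where $j'\colon X\times T \hookrightarrow X\times T\times\mathbb{A}^n$ encodes the embedding and the relevant support condition persists. The point of this factorization is that each factor is manifestly t-exact on the relevant subcategory: (a) for a closed immersion $j'$, Kashiwara's equivalence gives that $\int_{j'}$ is exact (it is the inclusion of the subcategory of $\dd$-modules supported on the closed subvariety); (b) for the projection $\mathrm{pr}\colon X\times\mathbb{A}^n\to X$ — which is smooth and affine — the pushforward $\int_{\mathrm{pr}}$ of a $\dd$-module \emph{supported on a subvariety finite over $X$} is exact, because relative to such a support the fibers of $\mathrm{pr}$ are finite, so the de Rham complex along the fibers has no higher cohomology. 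This last assertion is really the crux and I would prove it directly: if $\cg$ is a $\dd_{X\times\mathbb{A}^n}$-module set-theoretically supported on a subvariety $Z$ finite over $X$, then locally $\cg$ is a successive extension of $\dd$-module direct images of coherent sheaves living on $Z$ — or, more hands-on, the relative de Rham complex $\mathrm{DR}_{X\times\mathbb{A}^n/X}(\cg)$ computes $\int_{\mathrm{pr}}\cg$ and, because $\cg$ is supported on the finite-over-$X$ set $Z$, one can filter $\cg$ so that each graded piece is (the $\dd$-module pushforward under a closed immersion of) an $\oo_Z$-module, for which the relative de Rham complex is concentrated in a single degree by Kashiwara again.

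The step I expect to be the main obstacle is precisely (b): controlling $\int_{\mathrm{pr}}$ of a \emph{non-holonomic} coherent $\dd$-module merely known to be supported on a finite-over-$X$ subvariety, without recourse to Verdier duality (which is unavailable here, as the remark preceding the lemma stresses). The holonomic argument "finite morphisms are proper, hence $\int$ commutes with duality, hence is exact on both sides" is exactly what fails. My workaround is the dévissage just sketched: reduce to $\cg$ an honest $\oo_Z$-module by a good filtration whose associated graded is supported scheme-theoretically on (a thickening of) $Z$, invoke that $\int_\fp$ commutes with the relevant filtered colimits / is way-out in the sense of Grothendieck, and then for each graded piece use that $Z\to X$ finite makes $\int_{\mathrm{pr}}$ on $\oo_Z$-modules exact and in fact equal to the (exact) direct-image-as-sheaves functor composed with Kashiwara's equivalence. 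One subtlety to handle carefully: the support condition is only set-theoretic ($\supp \H^i\cg \subseteq Y$), so in the dévissage the thickenings must be taken into account, but since thickenings of $Y$ are still finite over $X$ this causes no trouble. Assembling these, $\int_\fp = \int_{\mathrm{pr}}\circ\int_{j'}$ is a composite of two exact functors, hence exact, which is the claim.
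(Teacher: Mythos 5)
Your reduction steps are sound and in fact run parallel to the paper's own: after localizing on $X$ and embedding $T$ as a closed subvariety of some affine space (so that $\fp$ factors as a closed immersion, whose pushforward is exact for free, followed by a coordinate projection), one peels off one coordinate at a time and is left with the projection $\fp\colon X\times\C^*\to X$; the paper phrases this as induction on $\dim T$. At that point $\int_\fp\cg$ is the two-term complex $[\fp_\idot\cg\xrightarrow{\ \partial\ }\fp_\idot\cg]$ and exactness amounts to $\Ker\partial=0$. The genuine gap is in your step (b), which you rightly call the crux but then dispose of with an argument that does not work. The d\'evissage cannot produce graded pieces that are ``pushforwards of $\oo_Z$-modules'': Kashiwara's equivalence applies only to \emph{smooth} closed subvarieties and yields at best pushforwards of coherent $\dd$-modules on smooth locally closed strata of $Z$, whose closures need not be smooth; and invoking ``a good filtration whose associated graded is scheme-theoretically supported on a thickening'' conflates a filtration by $\dd$-submodules with the order filtration, whose graded pieces live on $\T^*(X\times T)$ and carry no information about the de Rham differential. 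More fatally, even for a piece that genuinely is the Kashiwara pushforward of a $\dd_{Z'}$-module with $Z'$ smooth and finite over $X$, ``Kashiwara again'' gives nothing at the ramification points of $Z'\to X$: the equivalence straightens $Z'$ to a horizontal section only where $Z'\to X$ is \'etale. The claim that finiteness of the fibers of the support forces the fiberwise de Rham complex into a single degree is exactly the non-formal content of the lemma; for non-holonomic coherent modules it follows from no support or fiber-dimension count, so at the decisive point your argument is circular.

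What is needed is a direct proof that $\partial$ is injective, and this is what the paper supplies. Take $X$ affine and suppose $u$ is a nonzero global section with $\partial u=0$, where $\partial=t\,d/dt$. Let $I\subset\C[X\times T]$ and $I_X\subset\C[X]$ be the annihilators of $u$. Since $\supp(u)=V(I)$ is finite over $X$ while $V(\C[X\times T]\,I_X)=V(I_X)\times T$ has one-dimensional fibers, the inclusion $\C[X\times T]\,I_X\subset I$ is strict, so some $f=\sum_j t^jf_j\in I$ has a coefficient $f_j\notin I_X$. But $\partial u=0$ forces $(\ad\partial)^N(f)\,u=0$ for all $N$, i.e.\ $\sum_j j^N\,t^jf_j\,u=0$; a Vandermonde argument then gives $t^jf_j\,u=0$, hence $f_j\in I_X$, for every $j$ --- a contradiction. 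This annihilator-plus-eigenvector argument is the missing ingredient; your factorization through the closed immersion and the single-coordinate projection can stand as the framing around it.
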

\begin{proof}
  Using induction on $\dim T$ we reduce the statement
to the case of  a 1-dimensional torus $T=\C^*$. Thus,
we may assume that $\C[T]=\C[t,t\inv]$,
so  $\partial:={t\frac{d}{dt}}$ is a translation invariant vector field on
$T$. Then, the object $\int_\fp\cg$
is represented by a two-term complex
$[\fp_\idot\cg\xrightarrow{\partial}\fp_\idot\cg]$ concentrated in degrees $-1$ and $0$, and
we have $\H^{-1}(\int_\fp\cg)=\Ker\partial$.
Thus, we must show that $\Ker\partial=0$. 

We may assume that $X$ is affine and work with global sections.
Let $u\in \Ker\partial$ be a nonzero  global section.
Let $I$ be the annihilator of $u$ in $\C[X\times T]$, resp.  $I_X$ the annihilator of $u$ in $\C[X]$. Clearly, one has
$\C[X\times T] I_X\subset I$.
Observe that the inclusion here is strict since
the  composite $\supp(u)\into\supp(\cg)\to X$ is a finite morphism.
Hence, there exists a function $f\in I\sminus (\C[X\times T] I_X)$ such that $fu=0$.  
Since $\partial$ kills $u$, it follows that
the operator $(\ad \partial)^N(f)$  kills $u$, for all $N\geq0$. 
We can write
$f$ as a finite sum $f=\sum_{j\in\Z}\ t^j \cdot f_j$,  for some $f_j\in \C[X]$. Since
$(\ad\partial)^N(t^j f_j)= j^N\cdot t^j f_j$, we deduce that
$\sum_j\ j^N\cdot t^j f_ju=0$, for all $N$.
 This implies 
that   each of the functions $f_j$ kills $u$, that is, $f_j\in I_X$.
We conclude that   $\sum_j\ t^j f_j\in \C[X\times T] I_X$, contradicting the choice of ~$f$.
\ep

\begin{rem}\label{indep} One can check that neither the proof of Theorem
  \ref{pimm}
  nor  the proof of Theorem
  \ref{mm-prop}  relies on the fact that the algebra map $\rad$ in 
  \eqref{ag} is actually an isomorphism. This fact is only used
  to insure
  an implication
  \[ \text{$\mm$ is flat over $\ddt$\en\ $\Longrightarrow$\en\
      $\nn$ is flat over $\ddtw$.}
    \]

    Thus, the proof of the exactness of parabolic induction does not
    rely on the difficult results of Levasseur and Stafford \cite{LS1}-\cite{LS2}.
  \end{rem}

\section{Parabolic restriction}
\label{pf-sec} 
\subsection{}\label{d sub} 
In this subsection we collect some 
results concerning $W$-equivariant $\dd(T)$-modules.

Let $K$ be  an  algebraic group. Given  a smooth variety $X$ with a $K$-action,
we denote by  $(\ddx,K)\mmod$, resp. $(\dd_X,K)\mmod$,
the abelian category of   {\em strongly} $K$-equivariant 
$\ddx$-modules, resp. $\dd_X$-modules.

In the rest of this  subsection  we will use simplified notation
$\dd:=\ddt$.
The Weyl group $W$ acts on  $\dd$ by algebra automorphisms.
It is clear that the category
$(\dd,W)\mmod $ is equivalent to the category 
$({W}\ltimes \dd)\mmod$  of modules
over ${W}\ltimes \dd$, a smash-product algebra.

One has the following standard result.

\begin{prop}\label{eg} \vi The algebras ${W}\ltimes \dd$ and $\dd^W$ are   Morita equivalent; specifically, one has
  the following  mutually inverse equivalences:
  \beq{mor}
  \xymatrix{
    ({W}\ltimes \dd)\mmod\  \ar@<0.4ex>[rrr]^<>(0.5){(\dash)^W}&&&
\
\dd^{W}\mmod.\ \ar@<0.4ex>[lll]^<>(0.5){\dd\o_{\dd^W }(\dash)}
}
\eeq

\vii The algebra $\dd$ is finitely generated and projective as a left, resp. right, $\dd^{W}$-module.

\viii There is an isomorphism
$
({W}\ltimes \dd)\cong \ddt\o_\ddtw\ddt$, of
$W\times W$-equivariant $\ddt$-bimodules.

\iv For any $L\in D^b(\dd^W\mmod)$, 
 let $\dd$ act on $R\Hom_{\dd^W }(\dd,L)$ via right multiplication on the domain $\dd$.
Then, in $D^b(\dd\mmod)$, there is a canonical isomorphism
\[R\Hom_{\dd^W }(\dd,L)\ccong \dd\lo_{\dd^W } L.\]
\end{prop}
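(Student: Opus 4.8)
The plan is to exploit the Morita equivalence of part (i) together with the finiteness/projectivity of part (ii). First I would observe that since $\dd$ is finitely generated and projective as a right $\dd^W$-module, the two functors $R\Hom_{\dd^W}(\dd, -)$ and $\dd\lo_{\dd^W}(-)$ in question are both computed by naive (non-derived) operations termwise on a complex representing $L$; more precisely, for a bounded complex $L$ of $\dd^W$-modules, $R\Hom_{\dd^W}(\dd,L)=\Hom_{\dd^W}(\dd,L)$ and $\dd\lo_{\dd^W}L=\dd\o_{\dd^W}L$. So the statement reduces to producing a natural isomorphism of complexes of $\dd$-modules $\dd\o_{\dd^W}L\iso \Hom_{\dd^W}(\dd,L)$, where on the target $\dd$ acts by right multiplication on the copy of $\dd$ inside $\Hom$.

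The key point is therefore: the right $\dd^W$-module $\dd$ is not only finitely generated and projective but in fact \emph{self-dual}, i.e.\ there is an isomorphism $\Hom_{\dd^W}(\dd,\dd^W)\cong \dd$ as $(\dd,\dd^W)$-bimodules, where the right-hand $\dd$ carries its natural left $\dd$-action and its natural right $\dd^W$-action. Granting this, one gets for any $\dd^W$-module $M$ a chain of natural isomorphisms $\Hom_{\dd^W}(\dd,M)\cong \Hom_{\dd^W}(\dd,\dd^W)\o_{\dd^W}M\cong \dd\o_{\dd^W}M$ (the first using projectivity and finite generation of $\dd$ over $\dd^W$, the second using the self-duality), and one checks these are compatible with the left $\dd$-action coming from right multiplication on the $\dd$ appearing in $\Hom_{\dd^W}(\dd,\dd^W)$. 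Passing to a complex $L$ and using the termwise remark above then finishes the argument. The self-duality itself I would prove using part (iii): the isomorphism $W\ltimes\dd\cong \dd\o_{\dd^W}\dd$ of $\dd$-bimodules, combined with the trace/averaging map $\dd\to\dd^W,\ u\mapsto \frac1{|W|}\sum_{w\in W}w\bdot u$, which is a $\dd^W$-bimodule splitting of the inclusion $\dd^W\into\dd$ and provides a nondegenerate $\dd^W$-bilinear pairing $\dd\times\dd\to\dd^W$, $(u,v)\mapsto \frac1{|W|}\sum_w w\bdot(uv)$. Nondegeneracy of this pairing is exactly the content (or an easy consequence) of part (iii), since $\dd\o_{\dd^W}\dd\cong W\ltimes\dd$ is a free rank-$|W|$ module and the pairing matrix in a $\dd^W$-basis is invertible.

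The step I expect to be the main obstacle is establishing nondegeneracy of the averaging pairing, i.e.\ that the induced map $\dd\to\Hom_{\dd^W}(\dd,\dd^W)$ is an isomorphism rather than merely injective. On the commutative level this is the statement that $\C[T]$ is a Frobenius extension of $\C[T]^W$ (the Demazure/Bernstein–Gelfand–Gelfand phenomenon), and in the $\dd$-module setting one wants the analogous Frobenius property for $\dd^W\subset\dd$. I would deduce it from part (iii): writing $\dd=\bigoplus_{i} \dd^W e_i$ for a finite set of right-$\dd^W$-module generators coming from a $\C[T]^W$-basis of $\C[T]$ (which works because $\dd=\C[T]\cdot\ut$ and the grading/filtration by order lets one reduce the projective-module question to the associated graded, where it becomes the classical commutative Frobenius statement), the pairing matrix $\big(\tfrac1{|W|}\sum_w w\bdot(e_ie_j)\big)_{i,j}$ has as its symbol the classical BGG pairing matrix, which is invertible over $\C[T]^W[\t^*]$ after inverting the discriminant $\prod_{\al}\al$; a standard filtered-ring argument (invertibility of the symbol $\Rightarrow$ invertibility, possibly after checking the determinant is a unit — here it is, since it is $W$-anti-invariant of the expected degree, hence a nonzero scalar multiple of $\del^2$ up to the relevant normalization, and in fact a unit in $\dd^W$ because the higher-order corrections do not change its invertibility) then gives the claim. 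An alternative, cleaner route avoiding any symbol computation: part (iii) directly says $\dd\o_{\dd^W}\dd\cong W\ltimes\dd$ as bimodules, and the averaging pairing is precisely the composite $\dd\o_{\dd^W}\dd\to \dd\o_{\dd^W}\dd^W=\dd\xrightarrow{\text{av}}\dd^W$; tracing through the isomorphism with $W\ltimes\dd=\bigoplus_{w}\dd\cdot w$ one sees the pairing is perfect by inspection. I would present the argument via part (iii) in this second form, since it keeps everything within the algebraic framework already set up and sidesteps the classical-invariant-theory input, which is only needed implicitly to know part (iii) in the first place.
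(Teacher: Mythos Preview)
Your proposal is correct and follows essentially the same route as the paper: reduce to the self-duality $\dd\cong\Hom_{\dd^W}(\dd,\dd^W)$ using projectivity from (ii), and establish self-duality via the perfectness of the averaging pairing $(u,v)\mapsto e(uv)e$. The only difference is that where you sketch two arguments for perfectness (a filtered/symbol computation, or a deduction from (iii)), the paper simply invokes the general Morita-theoretic fact---once $HeH=H$, the pairing $eH\otimes_H He\to eHe$ is automatically perfect---citing \cite{EG}, Theorem~1.5(iii); this makes your symbol computation and the ``by inspection'' step unnecessary.
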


\begin{proof}  Part (i) is well known. In more detail, let
 $e=\frac{1}{|{W}|}\sum_{w\in{W}}\ w$ 
be  the averaging idempotent 
of the group algebra $\C {W}$, and put  $H:={W}\ltimes \dd$.
It is immediate to check that inside $H$, we have
$\dd^{W}= eHe$. Further, using that the algebra $\dd$ is simple, one proves
an equality $HeH=H$. This implies the Morita equivalence.
Further, since $H$ is   projective as a left  $H$-module,
it follows from the Morita equivalence
that
$eH$ is  projective as a left  $eHe$-module, proving  (ii).
Part (iii) says that the map $He\o_{eHe} eH\to H$ induced by multiplication
is an isomorphism. The latter statement follows from the Morita equivalence
by multiplying both sides by $e$ on the left.

To prove (iv), we use 
the pairing 
$eH\o_{H} He\to eHe,\ eu\o ve\mto euve$.
This pairing is known to be perfect, that is,
it induces an isomorphism
$\dd\to\Hom_{\dd^{W}}(\dd,\dd^{W})$,
of right $\dd^{W}$-modules,  cf. 
\cite{EG}, Theorem 1.5(iii) for a more general result.
Since $\dd$ is a projective $\dd^W $-module, tensoring both sides of the
isomorphism with $L$, we obtain a chain isomorphisms
\[\dd\o_{\dd^W } L \ \iso\  R\Hom_{\dd^{W}}(\dd,\dd^{W})\,\lo_{\dd^W } L\ 
  \iso\ 
  R\Hom_{\dd^{W}}(\dd, L),
\]
and (iv) follows. \ep
\vskip3pt

      \begin{proof}[Proof of Corollary \ref{mla}] We will use simplified notation  ${S}=\sym\t$.
        For $w\in W$ and an $S$-module $V$, let $V^w$ denote an ${S}$-module obtained from $V$ by twisting
        the ${S}$-action via the dot-action of $w$, i.e. such that $t\in\t$ acts on $V^w$ by
        $v\mto (w\cdot t)v$. The assumption that  $\supp V=\{\la\}$ is a regular point of $\t^*$ implies
        an isomorphism ${S}\o_{{S}^W} V = \oplus_{w\in W}\, V^w$, of $W\ltimes {S}$-modules.
        We deduce the following isomorphisms of $W\ltimes\dd$-modules:
        \begin{align*}
          \dd\o_{{S}^W} V &\ccong\dd\o_{S}({S}\o_{{S}^W} V)
          \ccong 
                             \dd\o_{S}(\oplus_{w\in W}\, V^w)\\
                           &\ccong \oplus_{w\in W}\  \dd\o_{S}V^w\,\ccong\,
        (W\ltimes\dd)\,\o_{{S}}\, V\\
                                          &\stackrel{\star}\ccong (\dd\o_{\dd^W}\dd) \o_{S} V          \ccong                                             \dd\o_{\dd^W} (\dd\o_{S} V),
        \end{align*}
        where isomorphism $(\star)$ holds                  by Proposition \ref{eg}(iii).

      Taking $W$-invariants yields  an isomorphism
                                            $\dd^W\o_{{S}^W} V\cong \dd\o_{S} V$, of $\dd^W$-modules.
                                            The desired statement now follows from Theorem \ref{main-thm}
                                            and the following isomorphisms
                                            \[\nn\o_{\dd^W}(\dd\o_{S} V)\ccong
                                              \nn\o_{\dd^W}(\dd^W\o_{{S}^W} V)\ccong \nn\o_{S^W}V.
                    \qedhere                        \]
                          \end{proof}

        \subsection{}         The functor of {\em parabolic restriction} is defined as the functor
        $\int_q  p^*: D^b(\dd_G)\to D^b(\dd_T)$.
    The functor  $\int_p q^*(-)$ of parabolic induction  commutes with
 Verdier duality, since  the morphism $p: \tg\to G$ is proper
 and the morphism $q:\tg\to T$ is smooth. It follows that 
 the functor   of   parabolic restriction is a right adjoint of  the functor of parabolic induction.

\begin{prop} \label{M cor} There is an isomorphism of functors
 that makes
  the following diagram commute
\[
\xymatrix{
D^b(\dd_G\mmod)\ \ar@{=}[d]^<>(0.5){R\Ga(G,\dash)}
\ar[rrrrr]^<>(0.5){\int_q   p^*(\dash)}
&&&&&\ D^b(\dd_T\mmod),\ \ar@{=}[d]^<>(0.5){R\Ga(T,\dash)}\\
D^b(\ddg\mmod)\ \ar[rrrrr]^<>(0.5){\ddt\lo_{\ddtw}\,R\Hom_\ddg(\nn,\dash)}
&&&&& D^b(\ddt\mmod)
}
\]
\end{prop}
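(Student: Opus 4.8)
The plan is to establish Proposition \ref{M cor} by combining the exactness of parabolic restriction (which follows from Theorem \ref{main-thm} by adjunction, since $\int_p q^*$ is t-exact and $\int_q p^*$ is its right adjoint) with the bimodule description of parabolic induction already obtained in Theorem \ref{main-thm}(i). First I would recall that under the equivalences $R\Ga(G,\dash)$ and $R\Ga(T,\dash)$ the functor $\int_p q^*$ corresponds to $\nn\lo_{\ddtw}(\dash)$, and that the right adjoint of the functor $\nn\lo_{\ddtw}(\dash):\dcoh(\ddt)\to\dcoh(\ddg)$ on derived categories of modules is the functor $R\Hom_\ddg(\nn,\dash):\dcoh(\ddg)\to D^b(\ddtw\mmod)$ (here using that $\nn$ is finitely generated as a left $\ddg$-module, so $R\Hom_\ddg(\nn,\dash)$ preserves coherence, together with flatness of $\nn$ over $\ddtw$ from Theorem \ref{main-thm}(ii)). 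Composing with $\ddt\lo_{\ddtw}(\dash)$ — which is the left adjoint of $(\dash)^W$, i.e. the Morita equivalence of Proposition \ref{eg}(i) applied on the level of derived categories — and then taking $W$-invariants, one recovers a $\dd_T$-module from a $\ddtw$-module via the prescription in the diagram. So the content is to check that $\ddt\lo_{\ddtw}R\Hom_\ddg(\nn,\dash)$ is genuinely right adjoint to $\mm\lo_\ddt(\dash)$, where $\mm=\nn\o_{\ddtw}\ddt$.

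The key steps, in order, would be: \textbf{(1)} Verify the adjunction $\big(\mm\lo_\ddt(\dash),\ ?\big)$ on derived module categories. Writing $\mm=\nn\o_{\ddtw}\ddt$ and using that $\ddt$ is projective over $\ddtw$ on both sides (Proposition \ref{eg}(ii)), one has $\mm\lo_\ddt M\cong \nn\lo_{\ddtw}M$ for $M\in D^b(\ddt)$, so the right adjoint is $N\mapsto$ (the $\ddt$-module structure on) $R\Hom_\ddg(\nn,N)$; the $\ddt$-action comes from the right $\ddt$-action on $\mm$, equivalently from the identification $R\Hom_\ddg(\nn,N)\cong R\Hom_{\ddtw}\big(R\Hom_\ddg(\ddg,\nn)\,,\,?\big)$ — more cleanly, from Proposition \ref{eg}(iv): $\ddt\lo_{\ddtw}R\Hom_\ddg(\nn,N)\cong R\Hom_{\ddtw}(\ddt,R\Hom_\ddg(\nn,N))^{\text{(with $\dd$-action)}}$, and one must match this with the naive right $\ddt$-module structure. \textbf{(2)} Transport this adjunction through the equivalences $R\Ga(G,\dash)$, $R\Ga(T,\dash)$ to get a right adjoint of $\int_p q^*$ on $\dd$-module categories, described by the composite $\ddt\lo_{\ddtw}R\Hom_\ddg(\nn,R\Ga(G,\dash))$. \textbf{(3)} Invoke uniqueness of adjoints: since $\int_q p^*$ is \emph{also} a right adjoint of $\int_p q^*$ (by the properness of $p$ and smoothness of $q$, giving compatibility with Verdier duality, as noted in the paragraph preceding the Proposition), the two right adjoints are canonically isomorphic, which is exactly the commutative diagram claimed.

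I expect the main obstacle to be \textbf{step (1)}, specifically pinning down the correct $\ddt$-module structure on $R\Hom_\ddg(\nn,N)$ and checking it agrees with the one coming from $\mm$ — i.e. that the $\ddt$-action through the right action on $\mm=\nn\o_{\ddtw}\ddt$ matches the $\ddt$-action obtained by transporting the right $\ddtw$-action on $\nn$ through the perfect pairing $\ddt\cong R\Hom_{\ddtw}(\ddt,\ddtw)$ of Proposition \ref{eg}(iv). This is a bookkeeping issue about which of several a priori different $\dd_T$-structures one obtains, and it is where the hypothesis $W$-equivariance (hence the passage through $W\ltimes\ddt$ and its Morita equivalence with $\ddtw$) is essential; the rest — coherence, boundedness, and the transport through global sections — is routine given Theorems \ref{main-thm} and \ref{mm-prop} and Proposition \ref{eg}. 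A secondary point worth stating carefully is that $R\Hom_\ddg(\nn,\dash)$ indeed lands in $\dcoh(\ddtw\mmod)$: this uses that $\nn$ admits a finite resolution by finitely generated projective (in fact free, locally) left $\ddg$-modules, which follows since $\ddg$ has finite global dimension and $\nn$ is coherent.
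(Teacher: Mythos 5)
Your proposal follows essentially the same route as the paper: one computes $R\Hom_{\ddg}(\mm,\dash)\cong R\Hom_{\ddtw}(\ddt,R\Hom_{\ddg}(\nn,\dash))\cong \ddt\lo_{\ddtw}R\Hom_{\ddg}(\nn,\dash)$ via the tensor--Hom adjunction and Proposition \ref{eg}(iv), identifies this as the right adjoint of $\mm\lo_{\ddt}(\dash)$, and concludes by Theorem \ref{mm-prop}(i) together with uniqueness of right adjoints, since $\int_q p^*$ is right adjoint to $\int_p q^*$. The module-structure bookkeeping you flag in step (1) is exactly what Proposition \ref{eg}(iv) is stated to resolve (the $\dd$-action via right multiplication on the domain), so the argument is correct and matches the paper's.
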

\begin{proof}
       Using a  standard adjunction between $R\Hom$ and $\lo$,
      we obtain the following isomorphisms of functors
\begin{align*}
  R\Hom_{\ddg}(\mm, \dash)&=R\Hom_{\ddg}(\nn\o_\ddtw\ddt, \dash)\\
  &\cong
       R \Hom_{\ddtw}\big(\ddt,\, R\Hom_{\ddg}(\nn,\dash)\big)\\
     &   \cong\ddt\lo_{\ddtw}\,R\Hom_{\ddg}(\nn,\dash),
\end{align*}
where the last  isomorphism follows from Proposition \ref{eg}(iv).
 Hence, for  $E\in D^b(\ddg\mmod)$ and $F\in D^b(\ddt\mmod)$, we find 
      \begin{align*}
    R\Hom_\ddg(\mm\lo_\ddt F,\, E) &=R\Hom_\ddt\big(F,\ R\Hom_\ddg\big(\mm, E)\big)\\
                                  &=R\Hom_\ddt\big(F,\ \ddt\lo_{\ddtw}\,R\Hom_{\ddg}(\nn,E)\big).
                                    \end{align*}

                                    We conclude that the functor $\ddt\lo_{\ddtw}\,R\Hom_{\ddg}(\nn,\dash)$ is a right adjoint of the functor  $\mm\lo_\ddt (\dash)$.
    The result now follows from   Theorem \ref{mm-prop}(i).
                                  \end{proof}
\vskip4pt
                                  
                                  It is  well known that parabolic restriction can be upgraded
                                  to a functor
                                  $\Res: D(\dd_G,G)\to D(\dd_T,W)$
between equivariant
derived categories.
We restrict our attention to   abelian  categories.
Let $\ind: \ddt\mmod\to (\dd_G,G)\mmod$, resp.
$\res_W: (\dd_G,G)\mmod\to (\dd_T,W)\mmod$,
be  the functor that  corresponds to
the functor  $\H^0\big(\mbox{$\int_p$}\,  q^*(\dash)\big)$,
resp. $\H^0\big(\mbox{$\int_q$}\,  p^*(\dash)\big)$,
via the equivalences $\Ga(G,\dash)$ and $\Ga(T,\dash)$.
Further, given a $\ddt$-module $F$ and $w\in W$, let
$F^w$ be the $\ddt$-module obtained by twisting the
$\ddt$-action by the automorphism $w$.
The  $\ddt$-module
$\oplus_{w\in W}\, F^w $
has the natural $W$-equivariant structure
such that the action of $y\in W$ is given by the
identity maps $F^w\to F^{yw}$.



                                    Our main result concerning parabolic restriction reads

                                    \begin{thm}\label{M prop} \vi The functor $\res_W$  is  isomorphic
                                      to the functor $E\mto \ddt\o_\ddtw E^G$.

 \vii The composite functor $\res_W\ccirc \ind: \ddt\mmod\to (\dd_T,W)\mmod$
 is isomorphic to the functor $F\mto \oplus_{w\in W}\, F^w$.
 
                                \viii      The functor $\res_W$ has  a left adjoint  functor
                                      \beq{ind} \ind_W:\  (\ddt,W)\mmod \to (\ddg,G),
                                      \en F\mto \nn\o_\ddtw F^W.
                                      \eeq

                                      The functors $\ind_W$ and $\res_W$ are exact; moreover,
                       the functor $\res_W$  induces an equivalence
 \beq{equiv}
   (\ddg,G)\mmod/\!\Ker(\res_W)\ \iso\ (\dd_T,W)\mmod.
   \eeq
                                    \end{thm}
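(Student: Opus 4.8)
The plan is to deduce all of Theorem \ref{M prop} from the already-established Theorem \ref{mm-prop}, Proposition \ref{M cor}, and the Morita machinery in Proposition \ref{eg}, keeping track of equivariant structures throughout. First I would identify $\res_W$ explicitly. By Proposition \ref{M cor}, under the equivalences $\Ga(G,-)$, $\Ga(T,-)$ the underlying $\ddt$-module of $\res_W(E)$ is $\H^0$ of $\ddt\lo_{\ddtw}R\Hom_\ddg(\nn,E)$; since $E$ is a strongly $G$-equivariant (hence $G$-locally finite) $\ddg$-module and $\nn=\ddg/\ddg\ad\g$, one has $\Hom_\ddg(\nn,E)=E^{\g}=E^G$, and the higher $\Ext$'s can be ignored after applying $\H^0$ together with the flatness of $\ddt$ over $\ddtw$ from Proposition \ref{eg}(ii). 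This gives the $\ddt$-module $\ddt\o_\ddtw E^G$. The content of part (i) beyond this is that the $W$-equivariant structure coming from the upgraded functor $\Res$ agrees with the tautological $W$-action on $\ddt\o_\ddtw E^G$ by left multiplication by $W\subset\ddt^W\ltimes\cdots$; I would pin this down by noting that $W$ acts on $\tg$ via its action on the fibers of $q:\tg\to T$ (the Grothendieck--Springer map is $W$-equivariant for the trivial action on $G$), so the equivariant structure on $\int_q p^*E$ is induced from this geometric $W$-action, which under global sections is exactly multiplication by $W$ on the left $\ddt$-factor of $\ddt\o_\ddtw E^G$.

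Part (ii) is then a direct computation: $\res_W\circ\ind$ sends $F$ to $\ddt\o_\ddtw(\nn\o_\ddtw F)^G$, and by the isomorphism $(\End_\ddg\nn)^{op}=(\ddg/\ddg\ad\g)^G\cong\ddtw$ of \eqref{ag} we have $(\nn\o_\ddtw F)^G=\nn^G\o_\ddtw F=\ddtw\o_\ddtw F=F$ as $\ddtw$-modules (here $\nn^G=(\ddg/\ddg\ad\g)^G$), so $\res_W\circ\ind(F)\cong\ddt\o_\ddtw F$. The $W$-equivariant structure is the tautological one on $\ddt\o_\ddtw F$, and the final step is the identification $\ddt\o_\ddtw F\cong\oplus_{w\in W}F^w$ as $W$-equivariant $\ddt$-modules: this is the specialization of the Morita picture, or more simply follows because $\ddt\o_\ddtw\ddt\cong W\ltimes\ddt$ as $W\times W$-equivariant $\ddt$-bimodules by Proposition \ref{eg}(iii), whence $\ddt\o_\ddtw F\cong(W\ltimes\ddt)\o_\ddt F=\oplus_w F^w$ with the stated $W$-action.

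For part (iii): the functor $F\mto\nn\o_\ddtw F^W$ on $(\ddt,W)\mmod$ is, via the Morita equivalence $(\dd_T,W)\mmod=(W\ltimes\ddt)\mmod\simeq\ddtw\mmod$ of Proposition \ref{eg}(i) sending $F$ to $F^W$, nothing but $\ind$ precomposed with that equivalence; since $\ind$ is left adjoint to $\Hom_\ddg(\nn,-)\circ(\text{forget})$—which, by part (i) and Proposition \ref{M cor}, is $\res_W$ up to the Morita equivalence—we get that $\ind_W$ is left adjoint to $\res_W$. Concretely I would verify the adjunction $\Hom_{(\ddg,G)}(\nn\o_\ddtw F^W, E)\cong\Hom_{(\ddt,W)}(F,\res_W E)$ by the chain $\Hom_\ddg(\nn\o_\ddtw F^W,E)=\Hom_\ddtw(F^W,\Hom_\ddg(\nn,E))=\Hom_\ddtw(F^W,E^G)=\Hom_{W\ltimes\ddt}(F,\ddt\o_\ddtw E^G)$, the last step being Proposition \ref{eg}(i). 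Exactness of $\ind_W$ is immediate from $\nn$ being flat over $\ddtw$ (Theorem \ref{main-thm}(ii)) and $(-)^W$ being exact on $W$-equivariant modules in characteristic zero; exactness of $\res_W$ follows since $\res_W(E)=\ddt\o_\ddtw E^G$ with $\ddt$ flat over $\ddtw$ and $(-)^G$ exact on $(\dd_G,G)\mmod$.

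Finally, for the equivalence \eqref{equiv}: I would show the counit $\ind_W\circ\res_W\to\Id$ need not be iso but that $\res_W$ is a localization. The cleanest route is: by part (ii) applied after the Morita equivalence, $\res_W\circ\ind_W\cong\Id$ on $(\ddt,W)\mmod$ (the twists $\oplus_w F^w$ collapse to $F$ after taking $W$-invariants, matching the identity). Hence $\res_W$ is a quotient functor: it is exact, essentially surjective, and admits a fully faithful right-or-left adjoint section $\ind_W$, so by the standard criterion (e.g. Gabriel--Popescu-type localization) $\res_W$ induces an equivalence $(\ddg,G)\mmod/\Ker(\res_W)\iso(\dd_T,W)\mmod$. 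I expect the main obstacle to be the bookkeeping of equivariant structures in parts (i)--(ii)—making sure the geometric $W$-action on $\tg$ translates under $R\Ga$ and all the tensor identifications into precisely the tautological $W$-action on $\ddt\o_\ddtw(-)$, rather than a twist of it—and secondarily checking that $\res_W\circ\ind_W\cong\Id$ with the correct equivariant structure so that the localization criterion genuinely applies.
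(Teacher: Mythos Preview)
Your approach is essentially identical to the paper's: both deduce (i) from Proposition \ref{M cor} via $\Hom_\ddg(\nn,E)=E^G$ and flatness of $\ddt$ over $\ddtw$; both compute (ii) by showing $(\nn\o_\ddtw F)^G=\nn^G\o_\ddtw F=F$ and then invoking Proposition \ref{eg}(iii) to rewrite $\ddt\o_\ddtw F$ as $(W\ltimes\ddt)\o_\ddt F=\oplus_w F^w$; both prove the adjunction in (iii) by the same $\Hom$-chain and establish \eqref{equiv} by checking $\res_W\ccirc\ind_W\cong\Id$ directly. The paper is terser on the $W$-equivariant bookkeeping you flag as the main obstacle---it relegates that discussion to a remark following the proof---so your extra care there is warranted but not a point of divergence.
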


\begin{rems}
\vi One can check that the $W$-action on $\ddt\o_\ddtw\Ga(G,\ce)$ induced by the natural $W$-action on
the first tensor factor  corresponds, via  isomorphism
of functors $\res$ and $ \ddt\o_\ddtw (\dash)^G$,
to the  $W$-action on $\Ga(T, \res_W\ce)$  that comes from the $W$-equivariant
structure on $\res_W \ce$
induced from the one on $\Res\ce \in D(\dd_T,W)$.
Conversely,
one can use the  $W$-action on $\ddt\o_\ddtw\Ga(G,\ce)$
to give $\H^0(\int_q  p^*\ce)$ a $W$-equivariant structure.
\vskip 2pt

\vii Part (ii) of the theorem was obtained  by T.H. Chen, \cite{Ch},
Proposition 3.2, by a different (less elementary)
method following an earlier result, \cite{Gu},  in the Lie algebra setting.

 \viii It follows from the theorem that an object $\ce$
    of $(\dd_G,G)\mmod$ is killed by the functor $\res_W$ iff  $\ce$ has no nonzero $G$-invariant
    global sections.
  \end{rems}
  
  \begin{proof}[Proof of Theorem \ref{M prop}]  
       For any $\ddg$-module $E$, one has
       \beq{eegg}
       \Hom_{\ddg}(\nn,E)=\Hom_{\ddg}(\ddg/\ddg\ad\g,E)=E^{\ad\g}=E^G.
\eeq
The isomorphism of functors stated in (i)
follows from this and  Proposition \ref{M cor}.

Observe next that for any  $\ddt$-module $F$
we have $(\mm\o_\ddt F)^G=\mm^G\o_\ddt F=F$.
Hence, from  part (i), using Proposition \ref{eg}(iii), we find
\begin{align*}
  \res_W(\ind F)&= 
                  \ddt\o_\ddtw (\mm\o_\ddt F)^G=\ddt\o_\ddtw F\\
  &=
 \big(\ddt\o_\ddtw \ddt\big)\o_\ddt F=\big(W\ltimes \ddt\big)\o_\ddt F.
 \end{align*}
The last expression is isomorphic to $\,\oplus_{w\in W} \, F^w$, proving (ii).

The functor  $\ind$
is exact since $\nn$ is a flat $\ddtw$-module.
    The functor $(\ddg,G)\mmod\to \ddtw\mmod,  E\mto E^G$,
    is exact since the $G$-action on  $G$-equivariant $\ddg$-modules is semisimple.
    Since
    $\ddt$ is   a flat $\ddtw$-module, 
    we deduce that  the functor $E \mto \ddt\o_\ddtw E^G$, hence the functor $\res_W$,  is
    exact.
    
    To prove that $\ind_W$ is a left adjoint of $\res_W$, observe
      that by  Morita equivalence,
       for any $F,F'\in (\ddt,W)\mmod $, 
                                    the natural map
                                    $\Hom_{(\ddt,W)\mmod }(F,F')\to
                                    \Hom_\ddtw(F^W, (F')^W)$ is an isomorphism.
                                    Hence, for  any $E\in (\ddg,G)\mmod$, using \eqref{eegg}
                                    we obtain
                                    \begin{align*}
                                      \Hom_{(\ddt,W)\mmod }(F,\,\res_W E) &\eq
                                                                          \Hom_{(\ddt,W)\mmod }(F,\,\ddt\o_{\ddtw} E^G)\\
                                      &\eq \Hom_\ddtw(F^W, \,\Hom_{\ddg}(\nn,E))\\
                                      & \eq\Hom_\ddg(\nn\o_\ddtw F^W,E)=\Hom_\ddg(\ind_W F,E).
                                    \end{align*}

      It remains to show that \eqref{equiv} is an equivalence.
               It is sufficient to check that for any $W$-equivariant $\ddt$-module
                         $F$             the adjunction morphism $F\to \res_W\ccirc\ind_W(F)$
                                      is an isomorphism.      To this end, we compute
                                    \[
                                      (\nn\o_\ddtw F^W)^G=
                                                                        (\nn^G)\o_\ddtw F^W =\ddtw\o_\ddtw F^W=F^W.
                                    \]
                                    Thus, using Proposition \ref{eg}(i) we obtain
                                    \[
                                      \res_W(\ind_W F)=\ddt\o_{\ddtw} \big((\nn\o_\ddtw F^W)^G\big)\eq
                                      \ddt\o_{\ddtw} F^W=F. \qedhere
                                    \]                  
                               \end{proof}

Let $\mm^t$ be a $(\ddt,\ddg)$-bimodule obtained from the
 $(\ddg,\ddt)$-bimodule $\mm$ by replacing the left $\ddg$-action
 by a right $\ddg$-action, resp. the right $\ddt$-action by a left
 $\ddt$-action, using the invariant volume form $dg$, resp. $dt$.

    \begin{cor}\label{res-tens}
   \vi    The functor $\res_W$ is isomorphic to the functor
   $E\mto \mm^t\o_\ddg E$.

   \vii    The functor $\res_W$ takes finitely generated $\ddg$-modules
   to  finitely generated $\ddt$-modules.
    \end{cor}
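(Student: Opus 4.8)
\emph{Overview.} The plan is to obtain (i) by running the argument of Theorem~\ref{mm-prop}(i) with the two projections out of $\tg$ interchanged, and to obtain (ii) from Theorem~\ref{M prop}(i) by an elementary finiteness argument on associated graded modules. For (i), I would apply Lemma~\ref{xyz}(1) to the diagram $T\xleftarrow{\,q\,}\tg\xrightarrow{\,p\,}G$, i.e.\ with the base now $G$, the auxiliary variety $T$, $Z=\tg$, and $\vp=\pq$, so that $\fm=\int_\vp\oo_\tg=\cmm$ by Theorem~\ref{pimm}. Since part (1) of Lemma~\ref{xyz} rests only on the projection formula and functoriality of push-forward (no properness or smoothness is used there), it applies verbatim and gives, for every $\ce\in\dcoh(\dd_G)$,
\[
\int_q p^*\ce\ \ccong\ R\fq_\idot\big(\cmm\lo_{\fp^*\dd_G}\,\fp^*\ce\big).
\]
Applying $R\Ga(T,\dash)$ and the base-change step from the proof of Theorem~\ref{mm-prop}(i) (with $G$ now in the role of the affine factor) then yields a natural isomorphism $R\Ga(T,\int_q p^*\ce)\ccong\mm^t\lo_\ddg R\Ga(G,\ce)$, where $\mm^t=R\Ga(G\times T,\cmm)$ is read as a $(\ddt,\ddg)$-bimodule via precisely the $\kk_G$- and $\kk_T$-twists defining $\mm^t$.

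\emph{Concentration in degree $0$, and conclusion of (i).} The complex $\mm^t\lo_\ddg R\Ga(G,\ce)$ lives in cohomological degrees $\le0$. On the other hand Proposition~\ref{M cor} identifies the same object with $\ddt\lo_\ddtw R\Hom_\ddg(\nn,R\Ga(G,\ce))$, which, as $\ddt$ is flat over $\ddtw$ by Proposition~\ref{eg}(ii), lives in degrees $\ge0$ (alternatively, $\int_q p^*$ is the right adjoint of the t-exact functor $\int_p q^*$, hence left t-exact). Since $R\Ga(T,\dash)$ is a t-structure–preserving equivalence, it follows that for $\ce$ a single $\dd_G$-module $\int_q p^*\ce$ is concentrated in degree $0$, so for $\ce\in(\dd_G,G)\mmod$ it equals $\res_W\ce$; passing to $\H^0$ gives $\res_W\ce\ccong\mm^t\o_\ddg R\Ga(G,\ce)$, which is (i). (One could instead prove (i) from the bimodule identity $\mm^t\ccong\ddt\o_\ddtw\nn^t$, but tracking the $\rho$-twist relating the $dt$-transpose of $\ddtw$ to $\ddtw$ itself makes that route more delicate.)

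\emph{Part (ii).} By Theorem~\ref{M prop}(i), $\res_W E\ccong\ddt\o_\ddtw E^G$ with $E^G=\Hom_\ddg(\nn,E)$; since $\ddt$ is finitely generated over $\ddtw$ (Proposition~\ref{eg}(ii)), it suffices to show that $E^G$ is finitely generated over $\ddtw$ for every finitely generated strongly $G$-equivariant $\ddg$-module $E$. By \eqref{eegg}, $E^G=E^{\ad\g}$, so $\ddg\,\ad\g$ annihilates $E^G$ and the $\ddg^G$-action on $E^G$ factors through $\ddg^G\onto(\ddg/\ddg\ad\g)^G\xrightarrow{\ \rad\ }\ddtw$; hence it is enough to prove $E^G$ finitely generated over $\ddg^G$. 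I would pick a $G$-stable good filtration on $E$ (possible since $E$ is finitely generated and $G$ is reductive); then $\gr(E^G)=(\gr E)^G$ by exactness of $G$-invariants, and $\gr E$ is a finitely generated $\C[\T^*G]$-module carrying an algebraic $G$-action, so by Hilbert's finiteness theorem $(\gr E)^G$ is finitely generated over $\C[\T^*G]^G=\gr(\ddg^G)$; therefore $E^G$ is finitely generated over $\ddg^G$, as needed.

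\emph{Anticipated obstacle.} The delicate point is in (i): matching the $\dd$-module-theoretic output of the interchanged computation with the specific $(\ddt,\ddg)$-bimodule $\mm^t$, which amounts to the same volume-form bookkeeping as in the proof of Theorem~\ref{mm-prop}(i), together with the degree-$0$ concentration — the one step that genuinely uses the left-exactness of parabolic restriction. Granting that, part (ii) is a routine associated-graded finiteness argument.
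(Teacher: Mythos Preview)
Your proof is correct, and for (ii) it coincides with the paper's: both invoke Hilbert's finiteness for $E^G$ over $\ddg^G$, with you spelling out the associated-graded step that the paper leaves implicit.

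For (i) your route is genuinely different. The paper argues algebraically in three lines: for $E\in(\ddg,G)\mmod$, semisimplicity of the $G$-action gives an isomorphism $E^{\ad\g}\iso \g E\backslash E=(\ad\g\,\ddg\backslash\ddg)\o_\ddg E$ of $\ddtw$-modules, whence
\[
\res_W E\ \cong\ \ddt\o_\ddtw E^G\ \cong\ \ddt\o_\ddtw(\ad\g\,\ddg\backslash\ddg)\o_\ddg E\ \cong\ \mm^t\o_\ddg E,
\]
the last step being precisely the bimodule identity you flagged as delicate. You instead rerun Lemma~\ref{xyz}(1) with $G$ and $T$ swapped (legitimate: $p\times q$ is proper so the projection formula applies, and the invariant form $dg$ plays the role of $dt$), and then force degree~$0$ by sandwiching the tensor description (degrees $\le 0$) against the $R\Hom$ description from Proposition~\ref{M cor} (degrees $\ge 0$). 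The paper's argument is shorter and makes the $G$-equivariance hypothesis do visible work via ``invariants $=$ coinvariants''. Your argument is longer but buys two things: it sidesteps the transpose/$\rho$-twist bookkeeping needed to identify $\mm^t$ with $\ddt\o_\ddtw\nn^t$, and the sandwich step actually shows that $\int_q p^*$ is concentrated in degree~$0$ on \emph{every} object of $\coh(\dd_G)$, not only on equivariant ones.
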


    \begin{proof}
      For any $\ddg$-module $E$, we have
      \[\g E\backslash E\ccong  (\ad\g\,\ddg\backslash\ddg)\o_\ddg E.
      \]

      If $E$ is, in addition,  a  semisimple  as a $G$-representation. Then,
the composite  $E^{\ad\g}\into E\onto
    \g E\backslash E$ is an isomorphism.
    Thus, using Theorem \ref{M prop}(i) we find
    \[
      \res_W(E)\ccong \ddt\o_\ddtw E^G\ccong \ddt\o_\ddtw (\ad\g\,\ddg\backslash\ddg)\o_\ddg E
      \ccong \mm^t\o_\ddg E.
      \]

      This proves (i).
      Part (ii) is a consequence of a well known result due to Hilbert;
      specifically, the result implies
      that $E^G$ is  finitely generated
      over $\ddg^G$ provided $E$ is  finitely generated over
      $\ddg$.
    \end{proof}

   \begin{rem} S. Gunningham  \cite{Gu} proved that
        Lie algebra  analogues $D(\dd_\g,G)\leftrightarrows D(\dd_\t,W)$
        of the functors
        $\Ind$ and $\Res$ preserve coherence.
        \end{rem}
\small{
\bibliographystyle{plain}

\begin{thebibliography}{HTT}



\bibitem[BZG]{BZG} D. Ben-Zvi, S. Gunningham,
  {\em Symmetries of categorical representations and the quantum Ngo action},
  arXiv:1712.01963, 2017.


\bibitem[BY]{BY}
  R. Bezrukavnikov, A. Yom Din, {\em
On parabolic restriction of perverse sheaves.}
arXiv:1810.03297.
\bibitem[Ch]{Ch} T.-H. Chen, 
{\em On the conjectures of Braverman-Kazhdan.}
arXiv:1909.05467.
J. Amer. Math. Soc., https://doi.org/10.1090/jams/992).


\bibitem[DG]{DG}
  V. Drinfeld, D. Gaitsgory,
  {\em On a theorem of Braden.} Transform. Groups 19 (2014), 313-358.
\bibitem[EG]{EG}
P.~Etingof and V.~Ginzburg,
 {\em Symplectic reflection algebras, {C}alogero-{M}oser space, and
   deformed {H}arish-{C}handra homomorphism.}
  Invent. Math.,  147  (2002), 243-348.

\bibitem[Gi]{Gi} V. Ginzburg, {\em Isospectral commuting variety, the Harish-Chandra 
$D$-module, and principal nilpotent pairs.} Duke Math. J. 161 (2012),  2023–2111.
\bibitem[Gu]{Gu} S.  Gunningham, {\em Generalized Springer theory for $D$-modules on a reductive Lie algebra.}
Selecta Math. (N.S.) 24 (2018), 4223–-4277.
\bibitem[HC]{HC} Harish-Chandra, {\em Invariant differential operators
 and distributions    on  a semisimple Lie group.}
 Trans. Amer. Math. Soc. 119 (1965), 457-508.
\bibitem[HK1]{HK1} R.~Hotta, M.~Kashiwara, {\em  The invariant holonomic
system on a
semisimple Lie algebra.}  Invent. Math.   {\bf 75}  (1984),   327-358.
\bibitem[HK2]{HK2}\bysame, \bysame,
{\em Quotients of the Harish-Chandra system by primitive ideals.}
  Geometry today (Rome, 1984),  185-205, Progr. Math., 60, 
Birkh\"auser Boston, Boston, MA, 1985.

\bibitem[HTT]{HTT} \bysame, K. Takeuchi, T. Tanisaki, {\em
$\dd$-modules, perverse sheaves, and representation theory.}
Progress in Mathematics, 236. Birkh\"auser Boston, Inc., Boston, MA, 2008. 
\bibitem[Ka]{Ka}
M.~Kashiwara, {\em 
The invariant holonomic system on a semisimple Lie group.}
In {\em Algebraic analysis}, vol 1, pp. 277-286. Academic
  Press, Boston, MA, 1988.


\bibitem[LS1]{LS1} T. Levasseur, J. Stafford,
{\it  Invariant differential operators and a homomorphism of
Harish-Chandra.}
 J. Amer. Math. Soc. {\bf 8}
(1995),  365-372.

\bibitem[LS2]{LS2} \bysame, \bysame,
{\it The kernel of an homomorphism of Harish-Chandra.} Ann. Sci.
\'Ecole Norm. Sup. {\bf 29} (1996), 385-397.


\bibitem[Ra]{Ra} S. Raskin, {\em
A generalization of the b-function lemma.}
arXiv:1611.04940.

\bibitem[Ri]{Ri}
R. W. Richardson, {\em
Commuting varieties of semisimple Lie algebras and algebraic groups.}
 Compositio Math. 38 (1979),  311-327.

\bibitem[Wa]{Wa} N. Wallach,
{\it  Invariant differential operators on a reductive Lie algebra and
 Weyl group representations.} J. Amer. Math.
Soc. {\bf 6} (1993),  779-816.
\end{thebibliography}

}

\end{document}